\def\m{{\mathfrak m}} 
\def\c{\mathcal}
\def\m{{\mu}}
\begin{document}

\newtheorem{theorem}{Theorem}[section]
\newtheorem{lemma}[theorem]{Lemma}
\newtheorem{proposition}[theorem]{Proposition}
\newtheorem{corollary}[theorem]{Corollary}
\newtheorem{problem}[theorem]{Problem}
\newtheorem{construction}[theorem]{Construction}

\theoremstyle{definition}
\newtheorem{defi}[theorem]{Definitions}
\newtheorem{definition}[theorem]{Definition}
\newtheorem{notation}[theorem]{Notation}
\newtheorem{remark}[theorem]{Remark}
\newtheorem{example}[theorem]{Example}
\newtheorem{question}[theorem]{Question}
\newtheorem{comment}[theorem]{Comment}
\newtheorem{comments}[theorem]{Comments}

\newtheorem{discussion}[theorem]{Discussion}

\renewcommand{\thedefi}{}

\long\def\alert#1{\smallskip{\hskip\parindent\vrule%
\vbox{\advance\hsize-2\parindent\hrule\smallskip\parindent.4\parindent%
\narrower\noindent#1\smallskip\hrule}\vrule\hfill}\smallskip}

\def\ff{\frak}
\def\tf{torsion-free}
\def\Spec{\mbox{\rm Spec }}
\def\Proj{\mbox{\rm Proj }}
\def\hgt{\mbox{\rm ht }}
\def\type{\mbox{ type}}
\def\Hom{\mbox{ Hom}}
\def\rank{\mbox{ rank}}
\def\Ext{\mbox{ Ext}}
\def\Tor{\mbox{ Tor}}
\def\Ker{\mbox{ Ker }}
\def\Max{\mbox{\rm Max}}
\def\End{\mbox{\rm End}}
\def\xpd{\mbox{\rm xpd}}
\def\Ass{\mbox{\rm Ass}}
\def\emdim{\mbox{\rm emdim}}
\def\epd{\mbox{\rm epd}}
\def\repd{\mbox{\rm rpd}}
\def\ord{\mbox{\rm ord}}

\def\DD{{\mathcal D}}
\def\EE{{\mathcal E}}
\def\FF{{\mathcal F}}
\def\GG{{\mathcal G}}
\def\HH{{\mathcal H}}
\def\II{{\mathcal I}}
\def\LL{{\mathcal L}}
\def\MM{{\mathcal M}}
\def\PP{{\mathcal P}}

\title[The conic geometry of rectangles inscribed in lines]{The conic geometry of rectangles \\ inscribed in lines}

\author{Bruce Olberding} 
\address{Department of Mathematical Sciences, New Mexico State University, Las Cruces, NM 88003-8001}

\email{bruce@nmsu.edu}

\author{Elaine A.~Walker}
\address{1801 Imperial Ridge, Las Cruces, NM 88011}

\email{miselaineeous@yahoo.com}

\begin{abstract} We develop a circle of ideas involving pairs of lines in the plane, intersections of hyperbolically rotated elliptical cones and the locus of the centers of  rectangles inscribed in lines in the plane. \end{abstract}

\subjclass{Primary 51N10; Secondary 11E10}

\thanks{\today}

\maketitle

\section{Introduction}

The problem that motivates this article   is that of finding all the rectangles inscribed in a  set of lines in the plane, i.e., all the rectangles whose vertices lie on lines in this set. Since any such rectangle has vertices on at most four of the lines,  this  reduces  to  finding all the  rectangles inscribed in   four  lines, a problem that 
 can   be   further broken down into that of finding all rectangles inscribed in {\it two pairs of lines such that one diagonal of the rectangle joins the lines in one pair and the other diagonal joins the lines in the other pair}. 
 By  varying the pairings of the four lines it is possible to catalog all the inscribed rectangles based on the sequence of lines on which the vertices lie in clockwise fashion.  
   (For example, if the lines are $A,B,C,D$, then the pairings $A,C$ and $B,D$ encode the rectangles whose vertices lie in the sequence $ABCD$ or $DCBA$.) 
{This reduction, a variation of which is called the ``permutation trick'' by Schwartz in \cite[Section 4.1]{Sch2}, allows us to focus on two pairs of lines.}  
A direct equational approach to describing the rectangles inscribed in the two pairs of lines results in cumbersome and rather opaque equations,
so while we do give a solution to this problem, our real interest lies in developing a geometric way to recast the problem in terms of elliptical cones and conic sections in order that these and related questions  can be dealt with by other than computational or  {\it ad hoc} means.

{Our approach is different from that of Schwartz in the recent article \cite{Sch}, which  deals also with rectangles inscribed in lines. Whereas we use  techniques involving linear algebra and conic geometry to make a theoretical framework for the problem,  
 Schwartz makes connections to  hyperbolic geometry and applies 
 computational and geometric methods to identify finer  features of the space of inscribed rectangles. 
 Despite the difference in approaches, there   
is interesting overlap at a few points. For example, 
 Schwartz proves that 
for a  ``nice'' configuration of four lines, the locus of points consisting of the centers of rectangles whose vertices lie in sequence on these four lines is a hyperbola. We prove  the same thing in Theorem~\ref{indiff}  with a different technique but in more generality and with a converse. (The cases in which the locus is not a hyperbola are dealt with in Proposition 4.4.\footnote{In \cite{OW2}, using another approach to the different cases,  we show that while the locus need not be a hyperbola, it is the image of a hyperbola under an affine transformation, a hyperbola that encodes the $x$-coordinates of the rectangle vertices on $A$ and $B$.})
 So although our article is self-contained, it is useful to compare it to Schwartz's article \cite{Sch} because the two approaches have  complementary methods that lead in different directions.

Graphs and animations illustrating several of the geometrical  ideas in the paper can be found in \cite{Zia}.

\section{Hyperbolically rotated cones}

What we call hyperbolically rotated cones belong to the class of real elliptical cones in ${\mathbb{R}}^3$ that have  apex in the $xy$-plane and central 
axis parallel to the $z$-axis. For such a  cone $\c A$ there is a positive definite symmetric $2\times 2$ matrix~$A$ and a point ${\bf a}$ in the $xy$-plane such that the defining equation for $\c A$ in variables ${\bf x} = (x,y)$ and $z$  is \begin{eqnarray} \label{form}
{z^2} &  = &  
 ({\bf x}-{\bf a})^T A ({\bf x}-\bf{a}).
 \end{eqnarray} 
 The point ${\bf a}$ is where the apex of the cone resides in the $xy$-plane, and it is the center of the ellipses that are the level curves of the cone. 
 We say that $A$ is the {\it cone matrix} for the cone $\c A$.   To emphasize the dependence of the cone ${\mathcal{A}}$ on the matrix  $A$ and point ${\bf a}$, we 
write ${\mathcal{A}} = (A,{\bf a})$.  (We treat points as vectors throughout.)

\begin{proposition} 
 \label{ugh} If $(A,{\bf a})$ and $(B,{\bf b})$ represent the same elliptical cone, then $A = B$ and ${\bf a} = {\bf b}$. 
 \end{proposition}
 
 \begin{proof}
Let $\alpha =  {\bf a}^TA{\bf a}+1$, and  
consider the  polynomials $f,g \in {\mathbb{R}}[x,y]$ of degree $2$ defined for ${\bf x}=(x,y)$ by 
$$f({\bf x}) = ({\bf x}-{\bf a})^TA({\bf x}-{\bf a}) -\alpha,  \:
g({\bf x}) = ({\bf x}-{\bf b})^TB({\bf x}-{\bf b}) -\alpha.$$
Since $A$ is positive definite, $\alpha \geq 1$. 
Each of these polynomials defines the level curve of the cone at height $\sqrt{\alpha}$ and hence both $f$ and $g$ 
define the same ellipse. Since $f$ and $g$ are irreducible and share infinitely many zeroes, there is a real number $\gamma$ such that 
%
  %
   %
   $f = \gamma g$ and hence $A = \gamma B$.  
 Moreover, since $({\bf 0} -{\bf a})^T A ({\bf 0} -{\bf a})= {\bf a}^TA{\bf a}\geq 0$, $({\bf 0} -{\bf b})^T B ({\bf 0} -{\bf b})= {\bf b}^TB{\bf b} \geq 0$ and $(A,{\bf a})$ and $(B,{\bf b})$ represent the same cone,    it follows that ${\bf a}^TA{\bf a} = {\bf b}^TB{\bf b}.$ 
 Therefore,  ${\bf a}^TA{\bf a}-\alpha = f({\bf 0})  = \gamma g({\bf 0})  =\gamma ( {\bf a}^TA{\bf a}-\alpha)$, so  $\gamma  = 1$  
  since $\alpha \ne {\bf a}^TA{\bf a}$. Thus $A = B$. 
Also, since  $(A,{\bf a})$ and $(B,{\bf b})$ represent the same cone, $0=({\bf a}-{\bf a})^T A ({\bf a}-{\bf a}) = ({\bf a}-{\bf b})^T B ({\bf a}-{\bf b})$. 
  As $B$ is positive definite, this implies  ${\bf a} = {\bf b}$. 
  \end{proof}

Among the elliptical cones defined by Equation~(\ref{form}), we single out those  that in the next section will serve as our three-dimensional expression of a pair of lines in the plane. 
We explain the reason for the terminology below.

 \begin{definition} A {\it hyperbolically rotated cone} ({\it HR-cone} for short) is 
   a  real elliptical cone as in Equation~(\ref{form}) whose cone matrix has determinant $1$.    
 \end{definition}

Since an HR-cone is specified by a cone matrix and a position in the plane, the HR-cones having apex at a given location  in the plane are parameterized by the group  $SL(2,{\mathbb{R}})$.

  \begin{notation} \label{nota} We denote by $I$ the $2 \times 2$ identity matrix. 
 For  real numbers $\phi$ and $0 < \theta < \frac{\pi}{2}$, 
   define 
 $$R_\phi = \begin{bmatrix} \cos \phi & -\sin \phi \\
\sin \phi & \cos \phi \end{bmatrix}, \: \Lambda_\theta =  \begin{bmatrix} \tan^2\theta &0  \\
0& \cot^2\theta \end{bmatrix}.$$ The matrix $R_\phi$
is  the (Euclidean) rotation matrix  that rotates a vector in the plane by $\phi$ radians in the counterclockwise direction, while  $\Lambda_\theta$ is a  squeeze matrix. 
\end{notation} 

Since $\tan^2(\theta)\cot^2(\theta) =1$, the matrix  $\Lambda_\theta$ 
  represents a  hyperbolic rotation in the sense that any point on the hyperbola $y = 1/x$ is ``rotated'' to another point on this same hyperbola  by  the matrix. 
 This  motivates the  terminology for our cones: Let  ${\mathcal{A}}=(A,{\bf a})$ be a cone as in Equation~(\ref{form}).  
Since $A$ is symmetric and positive definite, the square root $A^{\frac{1}{2}}$ of $A$ exists. 
If also $\c A$ is an HR-cone, 
the spectral theorem and the fact that $\det(A) = 1$ imply  
%
there are real numbers $\phi$ and $0<\theta< \frac{\pi}{2}$ with $$A^{\frac{1}{2}} = R_{\phi}(\Lambda_\theta)^{\frac{1}{2}}R_{-\phi}.$$
%
 The  geometric interpretation of this decomposition is that $A^{\frac{1}{2}}$ 
is a hyperbolic rotation along the axes obtained by rotating the $x$- and $y$-axes by $\phi$ radians. 
This hyperbolic rotation  accounts  for the shape of the  cone ${\mathcal{A}}$: Since the cone ${\mathcal{A}}$ is defined by 
$z^2 = ({\bf x}-{\bf a})^T A ({\bf x}-{\bf a})$ and  we have  $$({\bf x}-{\bf a})^T A ({\bf x}-{\bf a}) = 
(A^{\frac{1}{2}}({\bf x}-{\bf a}))^T(A^{\frac{1}{2}} ({\bf x}-{\bf a})),$$ the cone
${\mathcal{A}}$ is the image under the linear transformation induced by $A^{-\frac{1}{2}}$ of what we call a unit cone.

\begin{definition} 
A {\it unit cone} is a circular HR-cone. Thus a unit cone is 
an HR-cone of the form $(I,{\bf a})$ and its  
defining equation   is    $z^2 = ({\bf x}-{\bf a})^T ({\bf x}-{\bf a})$.
\end{definition}

%
In summary,  the cone   ${\mathcal{A}} = (A,{\bf a})$  is the image of the unit cone  under a linear transformation that  hyperbolically rotates the $x,y$ coordinates by $A^{-\frac{1}{2}}$ and leaves the $z$-coordinate fixed.  Thus a hyperbolically rotated cone is more precisely  a {\it hyperbolically rotated unit cone}.

This  leads to a simple criterion for distinguishing HR-cones. 
For a cone ${\c A} = (A,{\bf a})$ as in Equation~(\ref{form}), the  linear transformation induced by the  matrix $A^{-\frac{1}{2}}$  expands area by a factor of $(\det(A))^{-\frac{1}{2}}$. Since ${\mathcal{A}}$ 
 is the image of the unit cone under the linear transformation induced by $A^{-\frac{1}{2}}$, it follows that $\det(A) = 1$ if and only if the level curve $\c E$ at height~$1$ has area $\pi$. Therefore: 
{\it A real elliptical cone ${\c A}=(A,{\bf a}) $ as in Equation~$(\ref{form})$ is an HR-cone if and only if  the level curve   at height $1$   has area $\pi$.}

A similar argument shows that the level curve of an HR-cone at height $h$  has  area $\pi h^2$ and so the product of the lengths of the semi-major and minor axes of this ellipse is $h^2$.  
The only thing special about 
 the ellipses themselves that appear as level curves of HR-cones is the relationship between the  area of the level curve  and the height at which it occurs. Every ellipse $\c E$ in the plane occurs as a level curve of some HR-cone.  To see this, let $M$ and $m$ be the lengths of the semi-major and semi-minor axes, respectively. Then 
 the cone given by the equation $z^2   = mM^{-1}{x^2}+ m^{-1}M{y^2}$ has a level curve at height $z= \sqrt{Mm}$ that can be transformed into $\c E$ by rotation and translation.

As this suggests, any  real elliptical cone whose equation is given by  Equation~(\ref{form}) can be rescaled to obtain an HR-cone.

\section{The surface defined by a pair of lines}

We now consider pairs of   lines $L_1,L_2$ in the plane. (Throughout the paper, by a pair of lines we mean a pair of distinct lines.)
 We are specifically interested in the surface obtained by associating to each point $p$ in the plane half
 the length of the line segment through $p$ joining $L_1$ and $L_2$ and having $p$ as its midpoint.\footnote{Our definition of this surface is loosely motivated by Vaughn's proof (see  \cite[p.~71]{Mey}) that every Jordan closed curve has a rectangle inscribed in it.}  As we note in Lemma~\ref{basic}, as long as $L_1$ and $L_2$ are not parallel, 
 there is a unique such line segment.
  On the other hand, if $L_1$ and $L_2$  are parallel, then the set of points that occur as a midpoint of a line segment having endpoints on $L_1$ and $L_2$ is simply the set of points that are equidistant from $L_1$ and $L_2$.  If $p$ is a point on this line,  {\it every} line segment joining $L_1$ and $L_2$ and passing through $p$ has $p$ as its midpoint. 




\begin{definition} Let $L_1,L_2$  be a pair of    lines in the plane, and let ${\c S}$ 
be  the subset of ${\mathbb{R}}^3$ consisting of all  the points $(x,y,\pm z)$, where $2z$ is the  length of a line segment between $L_1$ and $L_2$ having $(x,y)$ as its midpoint. We say that $L_1$ and $L_2$ {\it define the surface ${\c S}$}. 
\end{definition}

The case where $L_1$ and $L_2$ are not parallel is the more intricate one, and we deal with it first. The case of parallel lines can be dispatched with quickly, and we do this at the end of the section.

 If $L_1$ and $L_2$ are intersecting and distinct lines, then 
  every point in the plane occurs as a midpoint of a {unique} line segment joining  $L_1$ and $L_2$.  This situation is summarized in the following lemma. To simplify notation, we assume that neither   $L_1$ nor $L_2$ is parallel to the $y$-axis, a situation that can always be obtained by a suitable rotation.

\begin{lemma} \label{basic}
Let $L_1$ and $L_2$ be distinct intersecting lines in the plane, neither of which is parallel to the $y$-axis, and let $m_i$ and $b_i$ denote the slope and $y$-intercept of $L_i$. 
For each point $(x,y)$   in the plane, there is  a unique line segment $L$ that has endpoints on $L_1$ and $L_2$  and midpoint $( x , y )$. The coordinates $(x_1,y_1)$ and $(x_2,y_2)$  of the endpoints on $L_1$ and $L_2$, respectively, are given by
\smallskip
 $$\begin{array}{ll}
\displaystyle{x_1
= \frac{-2m_2 x  +2 y -b_1-b_2}{m_1-m_2}} &
 \displaystyle{y_1 = \frac{-2m_1m_2 x +2m_1 y -b_1m_2-b_2m_1}{m_1-m_2}} \\ \\
\displaystyle{x_2 = \frac{2m_1 x -2y +b_1+b_2}{m_1-m_2}} & 
\displaystyle{y_2=\frac{2m_1m_2 x -2m_2 y+b_1m_2+b_2m_1 }{m_1-m_2}}.
\end{array}$$
\end{lemma}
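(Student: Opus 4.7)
The plan is to set up the problem as a linear system and solve it, then verify that the solution exists and is unique. Writing the condition that $(x_1,y_1)$ lies on $L_1$ and $(x_2,y_2)$ lies on $L_2$ together with the midpoint condition gives four equations:
\[
y_1 = m_1 x_1 + b_1, \quad y_2 = m_2 x_2 + b_2, \quad x_1 + x_2 = 2x, \quad y_1 + y_2 = 2y,
\]
in the four unknowns $x_1,y_1,x_2,y_2$.

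From here I would use $x_2 = 2x - x_1$ and $y_i = m_i x_i + b_i$ to eliminate everything except $x_1$. Substituting into $y_1 + y_2 = 2y$ yields
\[
(m_1 - m_2)x_1 = 2y - 2m_2 x - b_1 - b_2,
\]
which is solvable for $x_1$ precisely because $m_1 \neq m_2$: this is the place where the assumption that $L_1,L_2$ are distinct, intersecting, and non-vertical is essential, since it guarantees that the slopes differ. The formula for $x_1$ in the lemma drops out immediately, and the remaining coordinates are obtained by back-substitution: $y_1 = m_1 x_1 + b_1$, $x_2 = 2x - x_1$, and $y_2 = m_2 x_2 + b_2$, after simplifying over the common denominator $m_1-m_2$.

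Existence then follows because the displayed formulas indeed satisfy $y_i = m_i x_i + b_i$ and $(x_1+x_2)/2 = x$, $(y_1+y_2)/2 = y$, which can be verified by direct substitution. Uniqueness is automatic from the fact that at the key step the coefficient $m_1 - m_2$ is nonzero, so $x_1$ (and hence all other unknowns) is determined. I do not expect any real obstacle here; the whole argument is a short piece of elementary linear algebra, and the only thing to be careful about is the bookkeeping when simplifying $y_1$ and $y_2$ to match the stated formulas.
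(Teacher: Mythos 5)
Your proposal is correct and follows essentially the same route as the paper: reduce to the linear system $x_1+x_2=2x$, $m_1x_1+m_2x_2=2y-b_1-b_2$, observe that $m_1\neq m_2$ makes it uniquely solvable, and back-substitute to recover $y_1,y_2$. Nothing is missing.
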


\begin{proof}  The proof is  a matter of routine calculation. 
Let $( x , y )$ be a point in the plane,  let $x_1,x_2 \in {\mathbb{R}}$, and let $y_1 = m_1x_1+b_1,y_2 = m_2x_2+b_2$. Then $(x_1,y_1) \in L_1$ and $(x_2,y_2) \in L_2$.  Now $( x , y )$ is the midpoint of the line segment joining $(x_1,y_1) $ and $(x_2,y_2) $ if and only if $2 x  = x_1+x_2$ and $2 y =y_1+y_2$. Since $y_1 + y_2 -b_1-b_2 = m_1x_1+m_2x_2$, it follows that $( x , y )$ is the midpoint of the line segment joining $(x_1,y_1) $ and $(x_2,y_2) $ if and only if  
$ 2 x   =  x_1 + x_2$  and $ 
2 y  - b_1-b_2  =  m_1x_1 + m_2x_2.$ 
Since $m_1 \ne m_2$, there are unique $x_1,x_2$ that satisfy these equations. Now solve for $x_1$ and $x_2$. 
\end{proof}



\begin{theorem} \label{cone 1} If  $L_1$ and $L_2$  are a pair of  distinct    lines in the plane that meet in a point ${\bf a}$, then the surface defined by the two lines is the  HR-cone    ${\c S}=(R_{\phi} \Lambda_\theta R_{-\phi},{\bf a}),$
  where $2\theta$ 
 is the measure of the smaller angle between the two lines and $\phi$ is the angle between the  $x$-axis and the line that bisects the smaller angle between the two lines.

 \end{theorem}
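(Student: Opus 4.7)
The plan is to reduce the theorem to a standard configuration by applying a Euclidean motion, compute explicitly there via Lemma \ref{basic}, and then transform back.

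First I would observe that the surface $\mathcal{S}$ is defined purely in terms of midpoints and lengths of segments in the plane, both of which are preserved by Euclidean isometries. Consequently, if $T$ is any rotation or translation of the plane, then $\mathcal{S}$ generated by $T(L_1),T(L_2)$ is the image of the original $\mathcal{S}$ under the map $(x,y,z)\mapsto (T(x,y),z)$. I would use this to translate ${\bf a}$ to the origin and then rotate by $-\phi$, placing the bisector of the smaller angle between $L_1$ and $L_2$ along the $x$-axis. In the resulting reduced coordinates $L_1,L_2$ become the lines $y=(\tan\theta)x$ and $y=-(\tan\theta)x$. Since $0<\theta<\pi/2$, neither line is parallel to the $y$-axis, so Lemma \ref{basic} applies.

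Next I would feed $m_1=\tan\theta$, $m_2=-\tan\theta$, $b_1=b_2=0$ into Lemma \ref{basic}. The endpoint formulas collapse to
\[
(x_1,y_1)=(x+y\cot\theta,\; x\tan\theta + y),\qquad (x_2,y_2)=(x-y\cot\theta,\; -x\tan\theta + y),
\]
so $x_1-x_2=2y\cot\theta$ and $y_1-y_2=2x\tan\theta$. Hence, for $(x,y)$ the midpoint of the segment,
\[
z^2 \;=\; \tfrac{1}{4}\bigl[(x_1-x_2)^2+(y_1-y_2)^2\bigr] \;=\; \tan^2\theta\cdot x^2 + \cot^2\theta\cdot y^2 \;=\; (x,y)\,\Lambda_\theta\,(x,y)^T.
\]
Since $\det\Lambda_\theta=\tan^2\theta\cot^2\theta=1$, in the reduced coordinates $\mathcal{S}$ is precisely the HR-cone $(\Lambda_\theta,\mathbf{0})$.

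Finally I would undo the coordinate change. Writing $\mathbf{x}'=R_{-\phi}(\mathbf{x}-{\bf a})$ and using $R_\phi^T=R_{-\phi}$,
\[
z^2 \;=\; (\mathbf{x}')^T\Lambda_\theta\,\mathbf{x}' \;=\; (\mathbf{x}-{\bf a})^T\,R_\phi\Lambda_\theta R_{-\phi}\,(\mathbf{x}-{\bf a}).
\]
The matrix $R_\phi\Lambda_\theta R_{-\phi}$ is symmetric, positive definite (it is similar via an orthogonal matrix to the positive-definite diagonal $\Lambda_\theta$), and has determinant $1$, so $\mathcal{S}=(R_\phi\Lambda_\theta R_{-\phi},{\bf a})$ is the claimed HR-cone. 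The only delicate point is justifying the equivariance used in step one, but this is immediate from the isometry invariance of the defining conditions; the rest is a short direct calculation.
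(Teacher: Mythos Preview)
Your proof is correct and follows essentially the same approach as the paper: reduce by a Euclidean motion to the symmetric configuration $y=\pm(\tan\theta)x$ through the origin, compute the defining equation there, and then conjugate back via $R_\phi$ and the translation by ${\bf a}$. The only cosmetic difference is that you invoke Lemma~\ref{basic} to obtain the endpoint coordinates, whereas the paper derives $x_1-x_2$ and $y_1-y_2$ directly from the midpoint relations in this special case; the resulting computation and conclusion are identical.
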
 

\begin{proof}
We first prove the theorem in the case that ${\bf a}={\bf 0}$ and the $x$-axis bisects the smallest angle between $L_1$ and $L_2$. Let $z>0$, and let ${\mathcal{E}}$ be the set of all points $p$ in the plane such that $p$ is the midpoint of a line segment of length $2z$ with endpoints on $L_1$ and $L_2$. We show that ${\mathcal{E}}$ is an ellipse by finding its defining equation. 

The equations of $L_1$ and $L_2$ are given by $y = (\tan \theta)x $ and $y = -(\tan \theta)x$, respectively.  Let $(x,y) \in {\mathcal{E}}$, and let $(x_1,y_1),(x_2,y_2)$ be the endpoints on $L_1,L_2$ of the line segment through $(x,y)$ that has $(x,y)$ as its midpoint. Then 
$2x = x_1+x_2$  and  $2 y = y_1+y_2 = (\tan \theta)x_1 - (\tan \theta)x_2$. 
Since $0< \theta\leq\pi/4$, we have $\tan(\theta) \ne 0$, and so the second equation implies that $x_1 - x_2 = 2\cot(\theta)y.$  
These observations  together imply
\begin{eqnarray*}(2z)^2 & = & {(x_1-x_2)^2 + (y_1-y_2)^2} 
\:  = \: (2\cot(\theta)y)^2 + (\tan(\theta)x_1 + \tan(\theta)x_2)^2 \\
\: & = & 4(\cot^2 \theta)y^2 + 4(\tan^2 \theta)x^2.
\end{eqnarray*}
For each choice of $z>0$,  ${\mathcal{E}}$ is the ellipse in the plane whose defining equation is   $z^2 =(\tan^2 \theta)x^2+(\cot^2 \theta)y^2.$ Consequently, the surface defined by the lines $L_1$ and $L_2$ is 
 the HR-cone $(\Lambda_\theta,(0,0)).$

To see that the theorem holds in full generality, suppose that $L_1$, $L_2$, $\phi$ and $\theta$ are as in the statement of the  theorem. After a translation to the origin and a rotation of $\phi$ radians in the counterclockwise direction, the lines are in the position described in the first part of the proof, 
 and so  the surface defined by $L_1,L_2$ is the 
real elliptical cone with defining equation  $ 
z^2=({\bf x}-{\bf a})^T R_{\phi} \Lambda_\theta R_{-\phi} ({\bf x}-{\bf a}).$
This elliptical cone is hyperbolically rotated since $R_{\phi} \Lambda_\theta R_{-\phi}$ is symmetric, positive definite and has determinant $1$. 
\end{proof}

 \begin{remark} \label{axes rem}
Theorem~\ref{cone 1} implies that the major and minor axes of any level curve of the HR-cone ${\mathcal{C}}$ lie on the 
 pair of orthogonal  lines that bisect the defining lines for ${\mathcal{C}}$. 
 By the principal axis theorem, these lines are in the direction of any pair of linearly independent eigenvectors of the cone matrix for $\c C$. Similarly, the eigenvalues for the cone matrix are the squares of the lengths of the semi-major and semi-minor axes of the ellipse that is the level curve at height $z =1$. 
 \end{remark}

%
%


In Theorem~\ref{pos def} we show  how to obtain the defining lines
for an HR-cone
 from its cone matrix and apex location. This depends on the following lemma, which is needed  in Section 4 also. 

   \begin{lemma} \label{invertible} If $A \ne B$ are $2 \times 2$ symmetric positive definite matrices with determinant $1$, then $\det(A-B) <0$. 
   \end{lemma}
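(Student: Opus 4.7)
The plan is to reduce the determinant calculation to a trace inequality and then apply AM--GM to the eigenvalues. Since $A$ is positive definite it is invertible with $\det A = 1$, so
\[
\det(A-B) \;=\; \det(A)\,\det\bigl(I - A^{-1}B\bigr) \;=\; \det\bigl(I - A^{-1}B\bigr).
\]
For any $2\times 2$ matrix $M$, $\det(I-M) = 1 - \operatorname{tr}(M) + \det M$. Applying this with $M = A^{-1}B$, and using $\det M = \det B/\det A = 1$, gives the key identity
\[
\det(A-B) \;=\; 2 - \operatorname{tr}(A^{-1}B).
\]
So the lemma reduces to showing $\operatorname{tr}(A^{-1}B) > 2$ whenever $A\neq B$.

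Next I would exploit the fact that, although $A^{-1}B$ need not be symmetric, it is similar to a symmetric positive definite matrix with determinant $1$. Concretely, since $A$ is symmetric positive definite its square root $A^{1/2}$ is also symmetric positive definite, and
\[
A^{-1}B \;=\; A^{-1/2}\bigl(A^{-1/2} B A^{-1/2}\bigr) A^{1/2},
\]
so $A^{-1}B$ is similar to $C := A^{-1/2} B A^{-1/2}$. The matrix $C$ is symmetric (since $B$ and $A^{-1/2}$ are), positive definite (as $x^T C x = (A^{-1/2}x)^T B (A^{-1/2}x) > 0$ for $x\ne 0$), and has determinant $\det B / \det A = 1$. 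Hence its eigenvalues $\lambda_1,\lambda_2$ are positive reals satisfying $\lambda_1\lambda_2=1$, and these are also the eigenvalues of $A^{-1}B$.

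Now by the AM--GM inequality,
\[
\operatorname{tr}(A^{-1}B) \;=\; \lambda_1+\lambda_2 \;\geq\; 2\sqrt{\lambda_1\lambda_2} \;=\; 2,
\]
with equality iff $\lambda_1 = \lambda_2 = 1$. In that equality case $C$ is symmetric with both eigenvalues equal to $1$, hence $C = I$, which rearranges to $B = A$. The hard (and only delicate) step is this equality analysis: contrapositively, $A\neq B$ forces strict inequality $\lambda_1+\lambda_2 > 2$, and substituting back yields $\det(A-B) = 2 - (\lambda_1+\lambda_2) < 0$, as desired.
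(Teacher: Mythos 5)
Your proof is correct, and it takes a genuinely different route from the paper's. Both arguments ultimately reduce to bounding the same quantity --- after diagonalizing $A$ as $Q^T\Lambda Q$ with $\Lambda = \mathrm{diag}(\alpha,1/\alpha)$, the paper computes $\det(A-B) = 2 - d\alpha - a/\alpha$, which is exactly your $2 - \operatorname{tr}(A^{-1}B)$ in coordinates --- but the executions diverge from there. The paper works with explicit entries of $QBQ^T$, bounds $2\alpha - d\alpha^2 - a$ by maximizing a quadratic in one variable, and then needs a separate case analysis to rule out equality. You instead make the trace structure explicit via the $2\times 2$ identity $\det(I-M) = 1 - \operatorname{tr}(M) + \det(M)$, conjugate $A^{-1}B$ to the symmetric positive definite matrix $A^{-1/2}BA^{-1/2}$ of determinant $1$, and apply AM--GM to its eigenvalues. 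Your equality analysis is the cleaner of the two: equality in AM--GM forces both eigenvalues to equal $1$, hence $A^{-1/2}BA^{-1/2} = I$ by the spectral theorem, hence $B = A$, with no case-splitting. The one thing to note is that the identity $\det(I-M) = 1 - \operatorname{tr}(M) + \det(M)$ is special to the $2\times 2$ case, so like the paper's argument yours is genuinely two-dimensional; but within that setting it is shorter, more conceptual, and isolates the real content of the lemma (that a symmetric positive definite matrix of determinant $1$ has trace at least $2$, with equality only for the identity).
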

   
   \begin{proof} Since $A$ is symmetric, there is an orthogonal matrix $Q$ and a diagonal matrix $\Lambda$ such that $Q^T\Lambda Q = A$. 
  Because $\det(A) = 1$ there is $\alpha \ne 0$ with $$\Lambda=\begin{bmatrix} 
\alpha & 0 \\
0 & \frac{1}{\alpha}\\
\end{bmatrix}.$$
Also, since $A-B = Q^T(\Lambda-QBQ^T)Q$ and $\det(Q) = 1$, we have $\det(A-B) = \det(\Lambda-QBQ^T)$. 
We show that  $ \det(\Lambda-QBQ^T)<0$.

 Since $QBQ^T$ is symmetric, there are real numbers $a,b,d$ with $$QBQ^T = \begin{bmatrix} 
a & b \\
b & d\\
\end{bmatrix}.$$ 
Using the fact that $ad-b^2=\det(Q^TBQ) =1$, we have $$\det(A-B)=\det(\Lambda-QBQ^T) = (\alpha-a)\left(\frac{1}{\alpha}-d\right) -b^2 = 2-d\alpha-\frac{a}{\alpha}.$$

The fact that  $\alpha>0$ implies  $$2 -d\alpha -\frac{a}{\alpha} \leq 0 \: \Longleftrightarrow \: 2\alpha -d\alpha^2 -a\leq 0.$$  
Now  $d>0$ since $QBQ^T$ is positive definite, so the function $f(x)=2x -dx^2 -a$ attains its maximum value at $x=1/d$. Since $ad-b^2 = 1$,  the maximum value this function attains is $$f(d^{-1}) = \frac{2}{d} - \frac{1}{d} - a = \frac{1-ad}{d}=\frac{ -b^2}{d} \leq 0.$$  Therefore, $\det(A-B) \leq 0$. 

To rule out the case that  $\det(A-B) = 0$,  suppose otherwise. Then $f(\alpha)=0$, so that since $0=f(\alpha) \leq \frac{-b^2}{d}\leq 0$, 
 we have $b^2=0$,   $1 = ad-b^2 =ad$ and $a = d^{-1}$. 
So the assumption that $\det(A-B) =0$ implies  $$\Lambda-QBQ^T =  \begin{bmatrix} 
\alpha-a & 0 \\
0 & \frac{1}{\alpha} - \frac{1}{a}\\
\end{bmatrix}.$$ Therefore,  $$0 = \det(A-B)= (\alpha - a) \left(\frac{1}{\alpha}-\frac{1}{a}\right) =\frac{(\alpha-a)^2}{a\alpha}.$$
This implies  $\alpha = a$. From this, $a=d^{-1}$, and  $b=0$ we obtain $A =B$, a contradiction that implies 
 $\det(A-B) < 0$.
%
\end{proof}



\begin{theorem} \label{pos def}  

Let  ${\c S}$ be the surface
  defined by a pair of  distinct lines. 
   \begin{itemize}

\item[(1)]  If ${{\c S}}$ is an HR-cone but not a unit cone, then ${\c S}$ has a unique pair  of defining lines. With $\c S=(A,{\bf a})$, these lines  are given by the degenerate hyperbola $({\bf x}-{\bf a})^T(A-I)({\bf x}-{\bf a}) = 0.$

\item[(2)]  ${\c S}$   
    is a unit cone if and only if the defining lines for ${\c S}$ are orthogonal, which holds if and only if the cone matrix for ${\c S}$ is $I$. In this case, the lines in each pair of defining lines for 
   ${\c S}$ cross at the same point, and 
     every pair of orthogonal lines crossing at this point defines ${\c S}$.  

  \end{itemize}

\end{theorem}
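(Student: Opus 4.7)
The plan is to exploit the explicit form of the cone matrix from Theorem~\ref{cone 1}, namely $A = R_\phi \Lambda_\theta R_{-\phi}$, to extract the generating lines as the zero set of the indefinite quadratic form $({\bf x}-{\bf a})^T(A-I)({\bf x}-{\bf a})$. The first step is to apply Lemma~\ref{invertible} with $B = I$: whenever $A \neq I$ one has $\det(A - I) < 0$, so $A - I$ is symmetric and indefinite, and the quadratic form factors as a product of two distinct real linear forms in ${\bf x} - {\bf a}$. Thus the equation does define a genuine pair of distinct lines through~${\bf a}$, as claimed.

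For part~(1), I would reduce to the standard position of Theorem~\ref{cone 1}: after translating ${\bf a}$ to the origin and rotating by $-\phi$, the generating lines become $y = \pm(\tan\theta)\,x$ and the cone matrix becomes $\Lambda_\theta = \mathrm{diag}(\tan^2\theta, \cot^2\theta)$. In these coordinates the equation $({\bf x})^T (\Lambda_\theta - I)({\bf x}) = 0$ reads $(\tan^2\theta - 1)x^2 + (\cot^2\theta - 1)y^2 = 0$, and a short trigonometric manipulation (using $1-\tan^2\theta = \cos 2\theta/\cos^2\theta$ and $\cot^2\theta - 1 = \cos 2\theta/\sin^2\theta$) reduces it to $y^2 = \tan^2\theta\,x^2$, i.e.\ to $y = \pm(\tan\theta)\,x$. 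Reversing the rotation and translation identifies the solution locus with $L_1\cup L_2$ in the original coordinates. Uniqueness of the generating pair then follows from Proposition~\ref{ugh}: the equation $({\bf x}-{\bf a})^T(A-I)({\bf x}-{\bf a}) = 0$ is written purely in terms of the intrinsic data $(A,{\bf a})$ of the cone, so any other generating pair must agree with it.

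For part~(2), the equivalences are read off the same decomposition. If $\mathcal{S}$ is a unit cone, $A = I$, then $I = R_\phi \Lambda_\theta R_{-\phi}$ forces $\Lambda_\theta = I$, hence $\theta = \pi/4$, and the generating lines meet at angle $2\theta = \pi/2$. Conversely, orthogonal generating lines give $\theta=\pi/4$, $\Lambda_\theta=I$, and therefore $A=I$. The final assertion, that every pair of orthogonal lines meeting at ${\bf a}$ generates $\mathcal{S}$, is again immediate from Theorem~\ref{cone 1}: such a pair generates the HR-cone with apex~${\bf a}$ and cone matrix $R_\phi I R_{-\phi} = I$, which Proposition~\ref{ugh} identifies with~$\mathcal{S}$.

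I expect the main obstacle to be the algebraic verification that the factorization of $({\bf x}-{\bf a})^T(A-I)({\bf x}-{\bf a})$ recovers exactly the two generating lines; however, once one works in the coordinates where $A$ is diagonal this reduces to the one-variable identity above. The essential structural input is Lemma~\ref{invertible}, which guarantees that for $A \neq I$ the form $A - I$ is genuinely indefinite, so the ``degenerate hyperbola'' produces two real, distinct lines rather than a repeated line or an empty real locus.
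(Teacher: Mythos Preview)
Your proposal is correct and follows essentially the same route as the paper: both arguments hinge on Theorem~\ref{cone 1} for the decomposition $A=R_\phi\Lambda_\theta R_{-\phi}$, Proposition~\ref{ugh} to make the data $(A,{\bf a})$ intrinsic, and Lemma~\ref{invertible} (with $B=I$) to ensure the form $A-I$ is indefinite. The only difference is in the verification that $L_1\cup L_2$ is the zero set of $({\bf x}-{\bf a})^T(A-I)({\bf x}-{\bf a})$: you pass to the diagonal frame and simplify the quadratic directly to $y^2=\tan^2\theta\,x^2$, whereas the paper instead exhibits four non-collinear points $\pm R_\phi(\cos\theta,\pm\sin\theta)$ on $L_1\cup L_2$ and checks by hand that each satisfies ${\bf x}^TA{\bf x}={\bf x}^T{\bf x}$.
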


\begin{proof} 
The spectral theorem implies that there are  real numbers $0\leq \alpha < \pi$  and $0< \beta \leq \frac{\pi}{4}$ such that $A = R_{\alpha}\Lambda_{\beta}R_{-\alpha}$. Therefore, Theorem~\ref{cone 1} implies that
$\c S$ is defined by the 
  two  lines in the plane  meeting at the point ${\bf a}$ such that  $2\beta$ 
 is the measure of the smaller angle between the two lines, and  $\alpha$ is the angle between   the $x$-axis and the line that bisects the smaller angle between the two lines. 
 
(1)
 Suppose that ${\mathcal{S}}$ is an HR-cone that is not the unit cone. Let $L_1$ and $L_2$ be defining lines for ${\mathcal{S}}$, and let ${\bf b}$ be the point where these lines intersect. 
 With  $2\theta$ 
the measure of the smaller angle between the two lines, and  $\phi$  the angle between the  $x$-axis and the line that bisects the smaller angle, we have 
by Theorem~\ref{cone 1} that $\c S = (R_{\phi}\Lambda_{\theta}R_{-\phi},{\bf b})$.  
By Proposition~\ref{ugh}, $A = R_{\phi}\Lambda_{\theta}R_{-\phi}$ and ${\bf b}={\bf a}$.  
After translation, we can assume that the point ${\bf a}$ is the origin.
 Since ${\mathcal{S}}$ is not the unit cone,  $A \ne I$, so that by Lemma~\ref{invertible}, $\det(A-I) <0$. Therefore, the equation ${\bf x}^T(A-I){\bf x} = 0$ defines a degenerate hyperbola (see for example \cite[p.~161]{Kendig}). To prove that this pair of lines is simply  
 $L_1$ and $L_2$, it suffices to show there are four points  on $L_1$ and $L_2$, not all collinear, that satisfy the equation
${\bf x}^TA{\bf x} = {\bf x}^T{\bf x}.$


Let
 \begin{center} 
 $\begin{bmatrix} 
u \\
v \\
\end{bmatrix}
=R_{\phi}\begin{bmatrix} 
\cos(\theta) \\
\sin(\theta) \\
\end{bmatrix}
=
\begin{bmatrix}
\cos(\phi+\theta) \\
\sin(\phi+\theta) \\
\end{bmatrix},
$ \:
 $\begin{bmatrix} 
s \\
t \\
\end{bmatrix}
=R_{\phi}\begin{bmatrix} 
\cos(-\theta) \\
\sin(-\theta) \\
\end{bmatrix}
=\begin{bmatrix} 
\cos(\phi-\theta) \\
\sin(\phi-\theta) \\
\end{bmatrix}$.
\end{center} 

  The angles between the $x$-axis and each of the lines $L_1$ and $L_2$ are $\phi+\theta$ and $\phi-\theta$, and so 
one of the lines $L_1$, $L_2$  goes through the points 
$(s, t)$ and $(-s,  -t)$ and the other  goes through $(u,v)$ and $(-u,-v)$.
 Since each of these points satisfies the equation ${\bf x}^T{\bf x}=1$, 
we need only show  they also  satisfy the equation
${\bf x}^TA{\bf x} = 1.$

 Since 
$A =  R_{\phi}\Lambda_{\theta}R_{-\phi}$,  we have 
\begin{eqnarray*} 
\begin{bmatrix} 
u &
v \\
\end{bmatrix}A\begin{bmatrix} 
u \\
v \\
\end{bmatrix}
& = & 
\begin{bmatrix} 
\cos(\theta) &
\sin(\theta) \\
\end{bmatrix}R_{-\phi}
R_{\phi} \begin{bmatrix} 
\tan^2 \theta & 0\\ 
0 & \cot^2 \theta  \\
\end{bmatrix}
R_{-\phi}R_{\phi}
\begin{bmatrix} 
\cos(\theta) \\
\sin(\theta) \\
\end{bmatrix}
\\
& = & \cos^2(\theta) \tan^2(\theta) + \sin^2(\theta)\cot^2(\theta) \: = \: 1.
\end{eqnarray*}
Thus  $(u,v)$ and $(-u,-v)$ satisfy the equation  ${\bf x}^TA{\bf x} = 1.$ A similar calculation shows that that $(s,t)$ and $(-s,-t)$ also satisfy this equation, which verifies (1).

(2) By Proposition~\ref{ugh}, $(A,{\bf a})$ is a unit cone if and only if $A=I$.  
Moreover, 
if a pair of  defining lines for ${\mathcal{S}}$ is not an orthogonal pair, then Theorem~\ref{cone 1} implies that $A \ne I$ and hence ${\mathcal{S}}$ is not  a unit cone. Conversely, if a pair of defining lines is orthogonal, then Theorem~\ref{cone 1} implies that  $A = I$, so that ${\mathcal{S}}$ is the unit cone. 

Now suppose that $\c S = (I,{\bf a})$ is a unit cone. 
To see that every pair of orthogonal lines through ${\bf a}$ defines $\c S$, 
  let $L_1,L_2$ be a pair of orthogonal lines through ${\bf a}$. Then $\theta = \frac{\pi}{4},$ and so $\Lambda_\theta= I$. By Theorem~\ref{cone 1}, the cone defined by $L_1$ and $L_2$ is $  (I,{\bf a}) =   {\mathcal{S}},$
which proves that $L_1$ and $L_2$ are defining lines for ${\mathcal{S}}$.  
 \end{proof}

\begin{remark}
Theorem~\ref{pos def}(1) can be restated as asserting that if $\c S$ is an HR-cone that is not a unit cone, then the pair of  defining lines for $\c S$ is the projection to the $xy$-plane of the intersection of $\c S$ and a unit cone having its apex in the same location as the apex of $\c S$.
 Alternatively, the defining lines are the projection to the $xy$-plane of   the only two lines on the cone passing through the apex at an  angle of $45$ degrees from the $xy$-plane. By contrast, for a unit cone every line passing through the apex is at an  angle of $45$ degrees from the $xy$-plane, a fact reflected in statement (2) of  Theorem~\ref{pos def}. 
 \end{remark} 
 
 It remains to describe the surfaces defined by  a pair of parallel lines. Applying the relevant definitions, we have
 
 \begin{proposition} \label{cone 2}
 If $L_1$ and $L_2$ are parallel lines in the plane, then  the surface $\c S$   defined by these lines is  ${\c S}=\{(x,y,z):(x,y) \in L, |z| \geq d\},$ where 
 $L$ is the line that is equidistant from $L_1$ and $L_2$ and  $2d$ is the distance between $L_1$ and $L_2$. 
 \end{proposition}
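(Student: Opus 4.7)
The plan is to reduce to a normal form by a rigid motion and then read off both the locus of midpoints and the possible segment lengths by an elementary calculation. After a suitable rotation and translation we may assume $L_1$ is the line $y = d$ and $L_2$ is the line $y = -d$, so that $L$, the line equidistant from $L_1$ and $L_2$, is the $x$-axis and the distance between $L_1$ and $L_2$ is $2d$. Because both the generating construction of $\c S$ and the set on the right side of the claimed equality are invariant under rigid motions of the $xy$-plane (with the $z$-coordinate preserved), it suffices to prove the statement in this normalized position.

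Next I would show that the projection of $\c S$ to the $xy$-plane is exactly $L$. If $(x_1, d)$ and $(x_2, -d)$ are endpoints on $L_1$ and $L_2$, their midpoint is $\bigl((x_1+x_2)/2, 0\bigr)$, which lies on $L$, so the projection of $\c S$ is contained in $L$. Conversely, for any point $(x,0) \in L$ and any real number $t$, the points $(x+t, d)$ and $(x-t,-d)$ lie on $L_1$ and $L_2$ respectively and have midpoint $(x,0)$; so every point of $L$ arises as a midpoint.

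Finally, I would compute the set of half-lengths achievable at a given midpoint $(x,0) \in L$. Writing the endpoints as $(x+t,d)$ and $(x-t,-d)$, the squared length of the segment is
\begin{equation*}
(2t)^2 + (2d)^2 = 4(t^2 + d^2),
\end{equation*}
so half the length is $\sqrt{t^2 + d^2}$. As $t$ ranges over $\mathbb{R}$, this quantity ranges over $[d,\infty)$. Therefore $(x,0,z) \in \c S$ if and only if $|z| \geq d$, which is precisely the description of $\c S$ in the statement.

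The computation is entirely straightforward; the only thing to watch is the logical structure at the midpoint level: for parallel lines every segment joining $L_1$ and $L_2$ that passes through a point of $L$ automatically has that point as its midpoint (a fact already noted before the definition of the generated surface), so I do not need to impose any further midpoint condition beyond choosing the endpoints symmetrically about $x$ as above. No real obstacle arises, as the parallel case avoids the system of equations from Lemma~\ref{basic} (whose derivation needed $m_1 \ne m_2$) and replaces it with a single one-parameter family.
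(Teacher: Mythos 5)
Your proof is correct and follows the route the paper intends: the paper offers no written argument for this proposition (it is stated as following directly from the definitions, with the key observations about parallel lines already made in the discussion at the start of Section 3), and your normalization to $y=\pm d$ together with the computation that the half-length $\sqrt{t^2+d^2}$ sweeps out exactly $[d,\infty)$ supplies precisely the omitted details. Nothing is missing, and your remark that the parallel case bypasses the $m_1\ne m_2$ hypothesis of Lemma~\ref{basic} is an accurate reading of the paper's structure.
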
 
 
 Thus the surface $\c S$ in ${\mathbb{R}}^3$ defined by a pair of lines 
is either an HR-cone or a vertical plane with a missing midsection. 
In Theorem~\ref{cone 1} we saw that an HR-cone that is not a unit cone has a unique pair of defining lines. The same is true when $\c S$ is not an HR-cone.

 \begin{corollary} \label{pos def 2}  
 If the surface defined by a pair of lines is planar, then ${\c S}$ has a unique pair of defining lines. Let $d$ be the  distance between the half planes that comprise ${\c S}$. The defining lines  
 are  the parallel lines in the $xy$-plane that are the distance $\frac{d}{2}$ from the  projection of ${\c S}$ to the $xy$-plane.
 \end{corollary}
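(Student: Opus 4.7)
The plan is to combine Theorem~\ref{cone 1} with Proposition~\ref{cone 2}: the former forces any generating pair to be parallel, and the latter lets me read off the unique pair directly from $\c S$.

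First I would show that no pair of generating lines for $\c S$ can be a pair of intersecting lines. Indeed, if some pair of generating lines $L_1,L_2$ met at a point, then Theorem~\ref{cone 1} would imply that $\c S$ is an HR-cone; but a planar surface of the form described in Proposition~\ref{cone 2} is not an HR-cone (for instance, its level curves at heights above the threshold are single lines, not ellipses). Hence every pair of generating lines for $\c S$ must be parallel.

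Next, given any such parallel generating pair $L_1,L_2$, I apply Proposition~\ref{cone 2} to write $\c S = \{(x,y,z) : (x,y) \in L,\ |z| \geq d'\}$, where $L$ is the line equidistant from $L_1$ and $L_2$ and $2d'$ is the distance between $L_1$ and $L_2$. From the data of $\c S$ one now recovers both $L$ and $d'$: the projection of $\c S$ to the $xy$-plane is exactly $L$, and the distance between the two half planes $\{z \geq d'\}$ and $\{z \leq -d'\}$ that comprise $\c S$ is $2d'$, so in the notation of the corollary, $d = 2d'$ and $d' = d/2$. Since $L_1,L_2$ must be the two lines parallel to $L$ lying at perpendicular distance $d/2$ from $L$ on opposite sides, they are completely determined by $\c S$, giving uniqueness and the claimed description.

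The only subtlety is purely notational: Proposition~\ref{cone 2} parameterizes the planar surface by half the distance between the generating lines, whereas the corollary parameterizes it by the distance between the two half planes (twice as large), so one must be careful to insert the factor of $1/2$ when translating between the two statements. Beyond that, the argument is a direct combination of the two previous results.
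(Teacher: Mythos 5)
Your proof is correct and follows essentially the same route as the paper: use Theorem~\ref{cone 1} to rule out intersecting generating pairs, then read off the unique parallel pair from Proposition~\ref{cone 2}. Your explicit bookkeeping of the factor of $\frac{1}{2}$ (distance between half planes versus distance between generating lines) is actually cleaner than the paper's own accounting, which conflates the two quantities in passing.
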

 
 \begin{proof}  If $L_1,L_2$ is a pair of lines that define $\c S$, then $L_1$ and $L_2$ are parallel by Theorem~\ref{cone 1}. With $2f$ the distance between $L_1$ and $L_2$,  Proposition~\ref{cone 2} implies that $f = d$ and $L_1$ and $L_2$ are a distance $\frac{d}{2}$ from the projection of $\c S$ to the $xy$-plane. 
 \end{proof}


 

   

\section{Rectangle loci as cone intersections}

We associate a  locus of points to  two pairs of lines in the plane. The locus consists of  the centers of the rectangles that are inscribed in the lines and whose diagonals reflect the pairing of the lines. As we prove, the locus can be the empty set, the entire plane, a point, a line, a line with a segment missing, or a hyperbola, with the last case being the most interesting and ubiquitous. 

By ``a pair of lines'' we continue to mean a pair of a distinct lines, and by ``two pairs of lines'' we mean two distinct pairs of lines.  While the lines in each pair are distinct, the two pairs may share a line.

\begin{definition}
The {\it rectangle locus} for  two pairs of lines is the locus of points that occur as the center of a rectangle (possibly degenerate) inscribed in the two pairs of lines and that one diagonal of the rectangle joins the lines in one pair and the other joins the lines in the other pair.
\end{definition} 

 
  Given the rectangle locus for two pairs of lines, the rectangles inscribed in  the pairing can be reconstructed from their centers, and hence from the points on the locus.
  If at least one pair consists of  parallel lines, then a point on the rectangle locus will occur as the center of more than one rectangle inscribed in the pairs, but it is not hard to work out  what these rectangles are. 
Finding the centers of all rectangles  inscribed in  four distinct lines amounts to finding  21 rectangle loci. (We are not counting the degenerate rectangles whose vertices lie on a single line.) This is because there are 6 pairs of lines among the four. For each pair that is not parallel, we can find rectangles inscribed on the two lines in the pair and having center at the intersection of the pair.  
 Next, choosing all 12 groups of two pairs that have a line in common, we apply the results of this section to describe the rectangle loci for rectangles having vertices on exactly 3 of the 4 lines.\footnote{In the case of two pairs sharing a line, it is not too difficult to see that the rectangle locus will be a degenerate hyperbola if neither pair consists of parallel lines; see \cite{OW2}.}  
  Finally, choosing the 3 groups of pairs that have no lines in common, we find the rectangle loci for rectangles having vertices on all 4 lines. 
     It is this last case that is the most substantial and the case to which we devote much of this section and the next.



  \begin{lemma} \label{bridge} 
  The rectangle locus for  two pairs of lines in the plane is the projection  of the intersection of the  surfaces defined by  the pairs.   
  \end{lemma}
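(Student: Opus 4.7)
The plan is to use Remark~\ref{old} to reformulate a rectangle center as a common midpoint of two equal-length segments (one between the lines in each pair), and then match this reformulation directly against the definition of the surface generated by a pair of lines.

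First I would record two equivalences. By Remark~\ref{old}, a point $(x,y)$ lies in the rectangle locus for the two pairs iff there is a common half-length $z \geq 0$ such that each of the two pairs admits a line segment of length $2z$ with $(x,y)$ as midpoint (the two segments then serve as the two diagonals of the rectangle). On the other hand, by the very definition of the surface $\c S_i$ generated by the $i$-th pair of lines, a point $(x,y,z) \in {\mathbb{R}}^3$ lies in $\c S_i$ iff some line segment of length $2|z|$ between the lines of that pair has $(x,y)$ as midpoint.

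Combining these two observations yields the lemma directly: $(x,y)$ belongs to the rectangle locus iff there exists some $z \geq 0$ with $(x,y,z) \in \c S_1 \cap \c S_2$, which is exactly to say that $(x,y)$ is the projection to the $xy$-plane of a point of $\c S_1 \cap \c S_2$. Both set-theoretic inclusions follow by reading this equivalence in the two directions, and no appeal to the explicit form of $\c S_i$ (cone vs.\ degenerate planar surface) is needed.

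The only technical point to take care with is that the definitions here demand only the \emph{existence} of a segment of a given length with a given midpoint, not uniqueness. This is what lets the argument handle the degenerate situations uniformly: a pair consisting of parallel lines, where a single point on the midline can be the midpoint of infinitely many segments of the same length; the case $z = 0$, where the segments collapse to a single point at which the two pairs of lines share a common intersection; and the case where the two diagonals are collinear, giving a degenerate rectangle. In each of these situations the biconditional above still applies verbatim, so there is no real obstacle beyond the careful unwrapping of definitions.
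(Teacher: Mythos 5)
Your proof is correct and follows essentially the same route as the paper's: both invoke Remark~\ref{old} to recast a rectangle center as a common midpoint of two equal-length segments and then unwind the definition of the generated surface. Your explicit attention to existence rather than uniqueness of the segment is a welcome refinement that handles the parallel-lines case more cleanly than the paper's phrasing, but the argument is the same.
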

  
  \begin{proof}  The proof is a matter of applying the relevant definitions.  Let $A,C$ and $B,D$ be two pairs of lines in the plane. 
    A point $p=(x,y)$ in the plane is on the rectangle locus for these two pairs   if and only if the length $2z_1$ of the line segment joining $A$ and $C$ and 
 having $p$ as its midpoint is equal to the length $2z_2$ of     
    the line segment joining $B$ and $D$ and having $p$ as its midpoint. This is the case if and only if $(x,y,{z_1})=(x,y,{z_2})$, which holds 
       if and only if $(x,y)$ is on the projection to the $xy$-plane of the intersection of the surfaces defined by the two pairs $A,C$ and $B,D$.  
  \end{proof}

  With Lemma~\ref{bridge} we can revisit the idea discussed after Definition~4.1. Given four lines, none of which are parallel, there are 15 ways to group these lines into two pairs, allowing that the pairs may share a single line in common.  Each pair of lines defines a different HR-cone. Finding the rectangle locus for two pairs then amounts to finding the projection to the $xy$-plane of the intersection of the two HR-cones defined by these pairs. A series of graphs illustrating this  can be found in \cite{Zia}.

  
  


\begin{proposition} \label{cases}
Consider two pairs of lines in the plane. 
\begin{enumerate} 
\item If one pair consists of parallel lines, then the rectangle locus for the pairs is a line, a line  
 missing an open segment, a point or the empty set. 

\item 
The rectangle locus is the entire plane if and only if each pair consists of orthogonal lines and all four lines meet at the same point.  

\item Suppose neither pair consists of parallel lines. If the crossing points of the pairs are different and  either both pairs are orthogonal or
 one pair is  a translation of the other, then 
the rectangle locus is a line.

\end{enumerate}

 \end{proposition}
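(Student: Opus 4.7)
The uniform approach is to invoke Lemma~\ref{bridge} throughout: the rectangle locus is the projection to the $xy$-plane of the intersection of the two surfaces generated by the pairs. By Theorem~\ref{cone 1} and Proposition~\ref{cone 2}, each such surface is either an HR-cone (when the pair is non-parallel) or the planar set $\{(x,y,z):(x,y)\in L,|z|\ge d\}$ (when the pair is parallel), so the argument splits according to which combination occurs.

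For (1), suppose the first pair is parallel with midline $L$ and half-gap $d$. If the second pair is also parallel, with midline $L'$ and half-gap $d'$, then the intersection of the two surfaces lies over $L\cap L'$ with $|z|\ge\max\{d,d'\}$, and projects to $L$ if $L=L'$, to a single point if $L,L'$ cross, and to the empty set if $L,L'$ are distinct parallels. If instead the second pair generates an HR-cone $(A,{\bf a})$, then a point ${\bf p}\in L$ belongs to the locus exactly when $q({\bf p}):=({\bf p}-{\bf a})^TA({\bf p}-{\bf a})\ge d^2$; since $q|_L$ is a quadratic with positive leading coefficient (as $A$ is positive definite), this sublevel condition yields either all of $L$ or $L$ with an open segment removed (with endpoints included since they satisfy $q=d^2$).

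For (2), a planar surface projects only to a line, so if either pair is parallel the locus cannot be all of~${\mathbb{R}}^2$; hence both pairs generate HR-cones. For the two quadratic forms $z^2=q_i({\bf x})$ to admit a common~$z$ at every ${\bf x}$, one needs $q_1\equiv q_2$, and then Proposition~\ref{ugh} forces the two cones to coincide. Since the two pairs are distinct, Theorem~\ref{pos def}(1) rules out the non-unit case, so the common cone must be a unit cone; by Theorem~\ref{pos def}(2) both pairs are then orthogonal and meet at the apex. The converse follows immediately from Theorem~\ref{pos def}(2), which says all orthogonal pairs through a given point generate the same unit cone.

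For (3), both surfaces are HR-cones $(A_1,{\bf a}_1)$ and $(A_2,{\bf a}_2)$ with ${\bf a}_1\ne{\bf a}_2$. If both pairs are orthogonal, Theorem~\ref{pos def}(2) gives $A_1=A_2=I$, and the intersection equation reduces to $\|{\bf x}-{\bf a}_1\|^2=\|{\bf x}-{\bf a}_2\|^2$, the perpendicular bisector of ${\bf a}_1{\bf a}_2$. If one pair is a translation of the other, then since the cone matrix depends only on the angles of the generating lines (not their position), we again have $A_1=A_2=A$; the ${\bf x}^TA{\bf x}$ terms cancel in the equation $q_1({\bf x})=q_2({\bf x})$, leaving
\[
2({\bf a}_2-{\bf a}_1)^TA{\bf x}={\bf a}_2^TA{\bf a}_2-{\bf a}_1^TA{\bf a}_1,
\]
which defines a line because ${\bf a}_2-{\bf a}_1\ne{\bf 0}$ and $A$ is invertible. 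In either subcase, positive (semi)definiteness of $A$ ensures the common value of the quadratic forms along the solution line is nonnegative, so a real $z$ exists and the projection of the intersection is exactly this line. The main subtlety I anticipate is bookkeeping in part (1), specifically verifying that the removed segment is open (its endpoints lie in the locus because they satisfy $q=d^2$) and enumerating all the degenerate sub-configurations when the midlines of two parallel pairs coincide.
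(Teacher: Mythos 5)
Your proposal is correct and follows essentially the same route as the paper: translate everything via Lemma~\ref{bridge} into intersections of HR-cones and planar surfaces, then case-split. The only differences are cosmetic refinements — you treat the orthogonal subcase of (3) directly rather than reducing it to the translation case via Theorem~\ref{pos def}(2), and you explicitly note that the linear term $2({\bf a}_2-{\bf a}_1)^TA{\bf x}$ is nonzero (using that the crossing points differ and $A$ is invertible), a point the paper's proof leaves implicit.
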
 
 
 \begin{proof} (1) Suppose one pair consists of parallel lines. By Proposition~\ref{cone 2}, the surface defined by this pair of  lines is a  vertical plane with missing midsection whose projection to the $xy$-plane is a line. By Lemma~\ref{bridge}, the rectangle locus for the two pairs lies on this line.  If the other pair of lines consists of parallel lines, then Lemma~\ref{bridge} implies that the rectangle locus is a line, a point or the empty set. On the other hand, if the lines in the  other pair   are not parallel, then the surface defined by this pair is  an HR-cone by Theorem~\ref{cone 1}. Since  $\c S$ is a plane with a missing midsection, the rectangle locus   is either a line or a line missing an open segment. 
 
 (2)  If the rectangle locus is the entire plane, then by (1) neither pair consists of parallel lines, and so by Theorem~\ref{cone 1} each pair defines an HR-cone. From Lemma~\ref{bridge} it follows that the apexes of the two cones coincide, and so the projection to the $xy$-plane is the entire plane only if the two cones are identical. By Theorem~\ref{pos def}, two different pairs of lines define the same cone only if the lines in each pair are orthogonal. Thus   each pair consists of  orthogonal lines and all four lines pass through the same point.  The converse is a similar application of Theorem~\ref{pos def} and Lemma~\ref{bridge}.
 
 (3)  If both  pairs consist of orthogonal lines, then Theorem~\ref{pos def}(2) and Lemma~\ref{bridge} imply that we can assume one pair is a translation of the other. Thus we need only prove (3) in the case in which one pair is a translation of the other.   By Theorem~\ref{pos def}(1),  the cone matrices $A$ and $B$ for the two pairs are the same. Let ${\bf a}$ be the  point where the lines in  the first pair meet and ${\bf b}$ the  point where the lines in the  other pair meet. 
 By Lemma~\ref{bridge},  the rectangle locus is the set of points ${\bf x}=(x,y)$  such that $({\bf x}-{\bf a})^TA({\bf x}-{\bf a})-({\bf x}-{\bf b})^TA({\bf x}-{\bf b})=0.$ The coefficients for the monomials in $x,y$ of degree $2$ are $0$, so the rectangle locus is a line.    \end{proof}

 The converse of Proposition~\ref{cases}(3) is proved in Corollary~\ref{line cor}. 
The rectangle loci in the proposition occur in rather special circumstances. It remains to describe the rectangle loci for two generically chosen pairs of lines, where neither pair consists of parallel lines, at most one pair is orthogonal, and the pairs are not translations of each other. We show in Theorem~\ref{indiff} that these are precisely the sets of pairs whose rectangle locus is a hyperbola. 

   \begin{lemma} \label{eq lemma} 
   Let ${\c P}_1$ and ${\c P}_2$ be two pairs of    lines in the plane that intersect at ${\bf a}$ and  ${\bf b}$, respectively, and let $A$ and $B$ be the cone matrices for the HR-cones defined by the two pairs. 
Let $C = A-B$ and   ${\bf c} = A{\bf a}- B{\bf b}$. If $A \ne B$, then the rectangle locus for 
the pairs ${\c P}_1$ and ${\c P}_2$
is the set of all ${\bf x} \in {\mathbb{R}}^2$ such that 
$$({\bf x} - C^{-1}{\bf c})^T
 C({\bf x} - C^{-1}{\bf c})= {\bf c}^TC{\bf c}-{\bf a}^TA{\bf a}+{\bf b}^TB{\bf b}.$$
%

\end{lemma}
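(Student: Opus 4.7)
The plan is straightforward: combine the bridge lemma (Lemma~\ref{bridge}) with the cone description given in Theorem~\ref{cone 1}, eliminate $z$, and then complete the square using the invertibility of $C$ guaranteed by Lemma~\ref{invertible}.

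\textbf{Step 1 (translate to an equation in ${\bf x}$).} By Theorem~\ref{cone 1}, the surface generated by ${\c P}_i$ is the HR-cone whose defining equation is $z^2=({\bf x}-{\bf a})^TA({\bf x}-{\bf a})$ in the first case and $z^2=({\bf x}-{\bf b})^TB({\bf x}-{\bf b})$ in the second. By Lemma~\ref{bridge}, the rectangle locus is the projection to the $xy$-plane of the intersection of these two cones, and eliminating $z^2$ shows that ${\bf x}\in{\mathbb{R}}^2$ lies on the rectangle locus if and only if
\[
({\bf x}-{\bf a})^TA({\bf x}-{\bf a}) \;=\; ({\bf x}-{\bf b})^TB({\bf x}-{\bf b}).
\]
Existence of a real $z$ solving $z^2$ equal to the common value is automatic, since $A$ and $B$ are positive definite and so each side is nonnegative.

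\textbf{Step 2 (collect terms).} Expand both sides using symmetry of $A$ and $B$ and gather by degree in ${\bf x}$. The degree-two contribution is ${\bf x}^T(A-B){\bf x}={\bf x}^TC{\bf x}$, the linear contribution is $-2(A{\bf a}-B{\bf b})^T{\bf x}=-2{\bf c}^T{\bf x}$, and the constant terms give ${\bf a}^TA{\bf a}-{\bf b}^TB{\bf b}$. The condition above becomes
\[
{\bf x}^T C\,{\bf x}\;-\;2{\bf c}^T{\bf x} \;=\; {\bf b}^TB{\bf b}-{\bf a}^TA{\bf a}.
\]

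\textbf{Step 3 (complete the square).} This is the only place the hypothesis $A\ne B$ enters: by Lemma~\ref{invertible}, $\det C=\det(A-B)<0$, so $C$ is invertible (and still symmetric). Choosing ${\bf y}=C^{-1}{\bf c}$ and using symmetry of $C$,
\[
({\bf x}-{\bf y})^T C({\bf x}-{\bf y}) \;=\; {\bf x}^TC{\bf x}-2{\bf y}^TC{\bf x}+{\bf y}^TC{\bf y} \;=\; {\bf x}^TC{\bf x}-2{\bf c}^T{\bf x}+{\bf c}^TC^{-1}{\bf c}.
\]
Substituting this identity into the equation from Step~2 and moving the constant ${\bf c}^TC^{-1}{\bf c}$ to the right-hand side produces the quadratic-form identity asserted by the lemma.

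There is no serious obstacle: the argument is entirely bilinear-form bookkeeping once the bridge lemma has converted the geometric problem into an algebraic one. The one essential ingredient is Lemma~\ref{invertible}, which ensures that $C$ is invertible so that completing the square is legitimate and $C^{-1}{\bf c}$ makes sense; without it, the center of the conic on the left-hand side would not be defined.
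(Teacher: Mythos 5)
Your proposal is correct and follows essentially the same route as the paper: reduce to the equation $({\bf x}-{\bf a})^TA({\bf x}-{\bf a})=({\bf x}-{\bf b})^TB({\bf x}-{\bf b})$ via the bridge lemma, invoke Lemma~\ref{invertible} for the invertibility of $C$, and complete the square. Note only that your computation (like the paper's own proof, and like the form quoted later in Theorem~\ref{indiff}) yields ${\bf c}^TC^{-1}{\bf c}$ on the right-hand side, so the ${\bf c}^TC{\bf c}$ appearing in the lemma's statement is a typo rather than the identity you actually derived.
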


\begin{proof} 
By Theorem~\ref{pos def} and Lemma~\ref{bridge},   the rectangle locus for the two pairs of lines is  the set of all points ${\bf x}$ in ${\mathbb{R}}^2$ such that $$
({\bf x}-{\bf a})^T
 A({\bf x}-{\bf a})
-
({\bf x}-{\bf b})^T
 B({\bf x}-{\bf b})=0.$$ 
By Lemma~\ref{invertible}, 
$C$ is invertible. 
Mimicking a standard calculation for summing quadratic forms, we can expand the left-hand side of this equation 
 and  
 $$({\bf x}-C^{-1}{\bf c})^T   C({\bf x}-C^{-1}{\bf c})- {\bf c}^TC^{-1}{\bf c}+{\bf a}^TA{\bf a}-{\bf b}^TB{\bf b}$$
to see that they are equal, and so the lemma follows. 
\end{proof}


 
 
  
\begin{theorem}  \label{indiff} 
The rectangle locus for two pairs of lines is a hyperbola  if and only if
neither pair consists of parallel lines, at most one pair is orthogonal, and 
 the pairs are not translations of each other. \end{theorem}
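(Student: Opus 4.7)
The plan is to prove both directions separately, using Proposition~\ref{cases} for necessity and Lemma~\ref{eq lemma} for sufficiency.

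For \emph{necessity}, I take the contrapositive of each of the three stated conditions. If a pair consists of parallel lines, Proposition~\ref{cases}(1) rules out a hyperbolic locus. If both pairs are orthogonal, Proposition~\ref{cases}(2) (for coincident crossing points) or Proposition~\ref{cases}(3) (for distinct crossing points) forces the locus to be the entire plane or a line. If the two pairs are translations of each other, their crossing points must differ (else the pairs would coincide, contradicting the standing assumption of two \emph{distinct} pairs), so Proposition~\ref{cases}(3) again gives a line. None of these outcomes is a hyperbola.

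For \emph{sufficiency}, assume all three conditions hold. Both pairs generate HR-cones $(A,{\bf a})$ and $(B,{\bf b})$ by Theorem~\ref{cone 1}. The crucial intermediate claim is $A \neq B$: if exactly one pair is orthogonal, that pair's cone matrix is $I$ while the other's is not (the other pair being non-orthogonal by Theorem~\ref{pos def}(2) applied with the ``at most one'' hypothesis), so $A \neq B$ immediately; if neither pair is orthogonal, then $A,B \neq I$ and Theorem~\ref{pos def}(1) guarantees that each HR-cone has a unique pair of generating lines, so $A = B$ would force the second pair to be the translate of the first by ${\bf b} - {\bf a}$, contradicting the no-translation hypothesis. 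With $A \ne B$ in hand, Lemma~\ref{eq lemma} yields the defining equation $({\bf x} - C^{-1}{\bf c})^T C({\bf x} - C^{-1}{\bf c}) = k$, where $C = A-B$, ${\bf c} = A{\bf a} - B{\bf b}$, and $k = {\bf c}^T C^{-1}{\bf c} - {\bf a}^T A{\bf a} + {\bf b}^T B{\bf b}$. Lemma~\ref{invertible} gives $\det C < 0$, so $C$ is symmetric, invertible, and indefinite; its two eigenvalues have opposite signs and the equation is of hyperbolic type.

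The main obstacle is showing this conic is not degenerate --- the case $k=0$ would cut out a pair of intersecting lines through $C^{-1}{\bf c}$ rather than a proper hyperbola. Geometrically, $k=0$ should correspond to the two HR-cones sharing a ruling through their apexes, which translates to the two pairs of lines in the plane sharing a common line --- the situation the footnote to Definition~4.1 flags as producing a degenerate hyperbola. If ``hyperbola'' in the theorem statement is read inclusively (i.e., allowing degenerate conics of hyperbolic type), the argument above is complete; otherwise, one must either add ``no shared line'' as an explicit hypothesis or verify $k \neq 0$ directly from the algebraic expression, exploiting that $A, B$ are positive definite with $\det A = \det B = 1$ and $A \neq B$.
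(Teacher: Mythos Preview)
Your proof is correct and follows essentially the same route as the paper's: Proposition~\ref{cases} for necessity, then Theorem~\ref{cone 1}, Theorem~\ref{pos def}, Lemma~\ref{eq lemma}, and Lemma~\ref{invertible} for sufficiency. Your case analysis establishing $A\neq B$ is in fact more explicit than the paper's one-line appeal to Theorem~\ref{pos def}; the paper simply asserts ``Since one pair is not a translation of the other, Theorem~\ref{pos def} implies $A\neq B$,'' leaving the reader to unpack the two subcases you spell out.

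On your degeneracy concern: the paper reads ``hyperbola'' inclusively, allowing the degenerate case $k=0$ (a pair of crossing lines). This is signaled by the footnote after Definition~4.1, which anticipates degenerate hyperbolas when the two pairs share a line, and by Corollary~4.9, whose list of possible loci contains ``hyperbola'' but no separate entry for a pair of intersecting lines. So under the paper's convention your argument is already complete, and your instinct that $k=0$ corresponds to a shared line is the content the paper defers to a future article.
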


\begin{proof} 
If  the rectangle locus is a hyperbola, then  Proposition~\ref{cone 2} and Lemma~\ref{bridge} imply that neither pair consists of parallel lines, while Proposition~\ref{cases}(3) implies that the pairs are not  translations of each other and at least one pair is not orthogonal. 
Conversely, suppose  that neither pair consists of parallel lines, at least one pair is not orthogonal and 
 the pairs are not translations of each other. By Theorem~\ref{cone 1}, the surfaces defined by these two pairs are HR-cones, say $(A,{\bf a})$ and $(B,{\bf b})$. 
 Since one pair is not a translation of the other,  Theorem~\ref{pos def} implies that $A  \ne B$. 
 Lemma~\ref{eq lemma} implies the rectangle locus for the two pairs   is  the set of all ${\bf x}$ such that  $
({\bf x}-C^{-1}{\bf c})^T C({\bf x}-C^{-1}{\bf c})
= {\bf c}^TC^{-1}{\bf c}-{\bf a}^TA{\bf a}+{\bf b}^TB{\bf b},
$
where $C = A-B$ and  ${\bf c} =A{\bf a} - B{\bf b}$.  Since $C$ is a symmetric matrix and $\det(C)<0$ by Lemma~\ref{invertible}, the rectangle locus is a hyperbola (see for example \cite[p.~161]{Kendig}). 
\end{proof}

{For conditions under which the hyperbola is degenerate, see \cite{OW2} and \cite{Sch}.} 

  \begin{corollary} \label{line cor}  Given two pairs of lines in the plane, neither of which consists of parallel lines, the rectangle locus  is a line if and only if one pair is a translation of the other or both of the pairs are orthogonal. 
 \end{corollary}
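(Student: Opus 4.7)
My plan is to deduce this corollary by combining Theorem~\ref{indiff} (for ``only if'') with Proposition~\ref{cases}(3) (for ``if''), so essentially no new computation is required.

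For the forward direction, I will take the contrapositive of Theorem~\ref{indiff}. A line is not a hyperbola, and by standing hypothesis neither pair of lines is parallel; so by Theorem~\ref{indiff} at least one of the other two conditions characterizing a hyperbolic locus must fail. This forces either both pairs to be orthogonal or one pair to be a translation of the other, which is exactly the stated conclusion.

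For the converse, I will apply Proposition~\ref{cases}(3) directly. That proposition yields a line as the rectangle locus in both of the two asserted cases, but only under the auxiliary hypothesis that the pairs have different crossing points. I will verify that this hypothesis is met in the scenarios where the intended conclusion is genuinely a line: in the translation case, two distinct pairs related by a translation must have different crossings; and in the both-orthogonal case, if the crossings coincided then Theorem~\ref{pos def}(2) would force both pairs to generate the same unit cone, at which point Proposition~\ref{cases}(2) would deliver the whole plane in place of a line. Thus whenever the conclusion ``rectangle locus is a line'' actually holds, the crossings differ and Proposition~\ref{cases}(3) applies cleanly.

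The only real obstacle is the bookkeeping around this coincident-crossings edge case for two orthogonal pairs, where the locus inflates from a line to the whole plane; past that, the corollary is a direct rearrangement of results already proved in the section.
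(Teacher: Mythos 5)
Your overall route is the same as the paper's: the forward direction is the contrapositive of Theorem~\ref{indiff}, and the converse is an appeal to Proposition~\ref{cases}(3); the paper's own proof is exactly this two-sentence reduction. Your forward direction is correct as written.

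Where you depart from the paper is in checking the hypothesis of Proposition~\ref{cases}(3) that the two crossing points differ, and that check is warranted---the paper's one-line appeal to Proposition~\ref{cases}(3) silently assumes it. Your handling of the translation case is fine: two \emph{distinct} pairs related by a translation necessarily cross at distinct points. But your handling of the both-orthogonal case does not close the gap; it exposes one. If both pairs are orthogonal and all four lines are concurrent, Proposition~\ref{cases}(2) says the rectangle locus is the \emph{entire plane}, not a line, so the ``if'' direction of the corollary as literally stated fails for that configuration. Your sentence ``whenever the conclusion `rectangle locus is a line' actually holds, the crossings differ'' assumes the very conclusion you are trying to establish in that sub-case, so it proves nothing there; it merely restates that the implication breaks down. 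The honest resolution is to note that the statement needs the concurrent both-orthogonal configuration excluded (or the conclusion weakened to ``a line or the entire plane''); with that caveat your argument, like the paper's, goes through.
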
 
 
 \begin{proof} If the rectangle locus is a line, then Theorem~\ref{indiff} implies that one pair is a translation of the other or both pairs are orthogonal. 
 The converse follows from Proposition~\ref{cases}(3). 
  \end{proof}

  \begin{corollary} The rectangle locus for a pair of lines in the plane is either the empty set, a point, a line with an open segment missing, a line, a hyperbola or the entire plane. 
 \end{corollary}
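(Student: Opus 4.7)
The plan is a clean case analysis that collects the results already established in this section; the statement should read ``two pairs of lines'' (as in the rest of Section~4), and I will interpret it that way. All the hard work has been done in Propositions~\ref{cone 2} and~\ref{cases}, Theorems~\ref{cone 1}, \ref{pos def} and~\ref{indiff}, and Corollary~\ref{line cor}; the task is just to check that every configuration falls into one of the listed buckets.

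First I would dispose of the case in which at least one of the two pairs consists of parallel lines. Proposition~\ref{cases}(1) then immediately yields a line, a line with an open segment missing, a point, or the empty set, contributing four of the six possibilities. So I may assume neither pair consists of parallel lines. By Theorem~\ref{cone 1} each pair generates an HR-cone, say $(A,{\bf a})$ and $(B,{\bf b})$, and by Lemma~\ref{bridge} the rectangle locus is the projection of the intersection of these two cones.

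Now I would split on whether $A=B$. If $A\neq B$, then both pairs cannot be orthogonal (for by Theorem~\ref{pos def}(2) this would force $A=I=B$), and the pairs are not translations of each other (since by Theorem~\ref{pos def}(1) translated pairs share their cone matrix). Hence neither pair is parallel, at most one pair is orthogonal, and the pairs are not translations, so Theorem~\ref{indiff} gives a hyperbola. If instead $A=B$ and ${\bf a}\neq {\bf b}$, then one pair is a translation of the other, so Corollary~\ref{line cor} (or directly Proposition~\ref{cases}(3)) gives a line. The remaining case is $A=B$ and ${\bf a}={\bf b}$: the two HR-cones coincide, so both (distinct) pairs of lines generate the same cone. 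Theorem~\ref{pos def}(1) rules out a non-unit HR-cone, which has a unique generating pair; hence the common cone is a unit cone, both pairs are orthogonal pairs through the common apex, and Proposition~\ref{cases}(2) yields the entire plane.

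The cases exhaust all configurations and produce exactly the list: empty set, point, line with an open segment missing, line, hyperbola, or entire plane. There is no real obstacle here beyond bookkeeping; the only subtlety is remembering that two pairs may share a line or even have all four lines meeting at one point, which is precisely what is handled in the ${\bf a}={\bf b}$, $A=B$ sub-case via Theorem~\ref{pos def}.
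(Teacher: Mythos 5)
Your proof is correct and follows exactly the route the paper takes: the paper's proof is the one-line instruction ``Apply Proposition~\ref{cases}, Theorem~\ref{indiff} and Corollary~\ref{line cor},'' and your case analysis is precisely the bookkeeping that instruction compresses. One cosmetic slip: when $A=B=I$ with ${\bf a}\ne{\bf b}$ the two orthogonal pairs need not be translations of each other, but Corollary~\ref{line cor} (and Proposition~\ref{cases}(3)) already covers the ``both orthogonal'' alternative, so your conclusion of a line stands unchanged.
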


\begin{proof} Apply 
 Proposition~\ref{cases}, Theorem~\ref{indiff} and Corollary~\ref{line cor}.
 \end{proof}

 

\begin{corollary} \label{late cor} The intersection of two HR-cones projects to a hyperbola in the $xy$-plane.
\end{corollary}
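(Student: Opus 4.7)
The plan is to translate the three-dimensional statement into the two-dimensional language of this section and then invoke Theorem~\ref{indiff}. By Theorem~\ref{cone 1} together with Theorem~\ref{pos def}, every HR-cone is the surface generated by a pair of distinct, non-parallel lines in the $xy$-plane (unique, except in the unit cone case where any orthogonal pair through the apex suffices). So to the two given HR-cones one associates two pairs of generating lines $\mathcal{P}_1$ and $\mathcal{P}_2$, and by Lemma~\ref{bridge} the projection to the $xy$-plane of the intersection of the two cones is exactly the rectangle locus of $\mathcal{P}_1$ and $\mathcal{P}_2$.

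The second and final step is to apply Theorem~\ref{indiff} to this rectangle locus. Theorem~\ref{indiff} requires three things: neither pair consists of parallel lines, at most one pair is orthogonal, and the two pairs are not translations of one another. The first is automatic, since by Theorem~\ref{cone 1} parallel lines do not generate an HR-cone but instead a vertical plane with missing midsection. By Theorem~\ref{pos def}, the remaining two conditions translate into the single requirement that the two cones have distinct cone matrices: translated pairs share a cone matrix, and two orthogonal pairs both yield the cone matrix $I$, so simultaneous failure of both hypotheses is equivalent to coincidence of the cone matrices.

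The main obstacle is interpretive rather than computational: the corollary as stated must be read with the implicit proviso that the two HR-cones have distinct cone matrices. In the excluded case Remark~\ref{line cone} shows the intersection lies in a vertical plane and therefore projects to a line (or a line with a missing open segment), not a hyperbola, and if the cones coincide the projection is the whole plane. Once this proviso is in place, no further work is needed: the proof is simply the composition of Theorem~\ref{cone 1}/Theorem~\ref{pos def} with Lemma~\ref{bridge} and Theorem~\ref{indiff}, producing the conclusion with no new calculation.
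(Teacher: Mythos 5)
Your proof is correct and follows exactly the route the paper intends: the corollary is stated there without proof, as an immediate consequence of Theorem~\ref{cone 1}, Theorem~\ref{pos def}, Lemma~\ref{bridge} and Theorem~\ref{indiff}, which is precisely the chain you assemble. Your further observation that the statement needs the implicit proviso that the two cones have distinct cone matrices is also well taken --- without it the intersection can project to a line or to the entire plane, as the paper itself concedes in Remark~\ref{line cone} --- so flagging and handling that case strengthens rather than departs from the intended argument.
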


 

 {Figure 1 illustrates Corollary~\ref{late cor}.}
 Using the determinant criterion for classifying conics, it is not hard to give an example
of  an HR-cone and a cone as in Equation (1) whose intersection projects  to an ellipse or a parabola in the plane. In light of Corollary~\ref{late cor}, the second cone cannot be an HR-cone.

 \begin{figure}[h] \label{picture}
 \begin{center}
 \includegraphics[width=0.65\textwidth,scale=1.2]{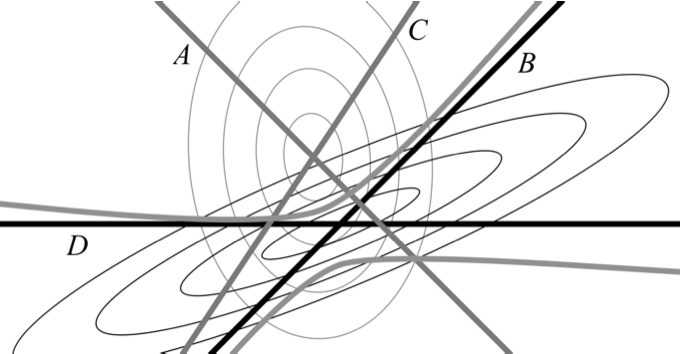} 
 \end{center}
 \caption{
{The level curves of the two HR-cones defined by the pairs $A,C$ and $B,D$.  The rectangle locus, which is the projection of the intersection of the two cones to the plane, {is the hyperbola that passes through the intersections of the level curves}.} 
  }
\end{figure}

 

\begin{remark}
Given  two pairs of lines whose rectangle locus is a hyperbola, let   $(A,{\bf a})$ and $(B,{\bf b})$ be the HR-cones defined by the two pairs. If ${\bf p}$ is the center of the hyperbola, then Lemma~\ref{eq lemma} and the fact that by Theorem~\ref{indiff} neither pair is a translation of the other
 imply that $A \ne B$ and the pair of asymptotes for the rectangle locus is the degenerate hyperbola $({\bf x} - {\bf p})^T(A-B)({\bf x}-{\bf p})=0.$
 Lemma~\ref{bridge} implies this  
  pair of lines is the projection   of the intersection of the  two HR-cones $(A,{\bf p})$ and $(B,{\bf p})$.  Thus a translated copy of the asymptotes of the rectangle locus can be found by moving the apex of one cone to the apex of the other and projecting the intersection to the $xy$-plane. 
 \end{remark}

\begin{remark} 
 Remark~\ref{axes rem}, Lemma~\ref{bridge} and Corollary~\ref{line cor}  imply that if ${\c A}$ and ${\c B}$ are HR-cones, then the intersection of these two cones lies in a vertical plane if and only if either both cones are unit cones or the major axes of the level curves of the two cones are parallel. 
Suppose the intersection of the two cones does not lie in a 
 single vertical plane. Then the intersection  is a curve in ${\mathbb{R}}^3$ consisting of four branches,  with each branch having another branch among the four as a reflection through the $xy$-axis. By Lemma~\ref{bridge} and Theorem~\ref{indiff}, each reflected pair projects to a branch of a hyperbola in the $xy$-plane. However, a branch of the intersection of the two cones need not  lie in a plane.  For example, consider the two HR-cones \begin{center} $z^2 = (x-1)^2+y^2$ \: and \: $ z^2 = 2x^2+\frac{1}{2}y^2.$
\end{center}
The four points $(1,2,4),(2,\sqrt{14},15),(3,\sqrt{28},32),(4,\sqrt{46},55)$ lie on a single branch but do not lie in a plane since $$\det\left(\begin{bmatrix}
1 & 2 & 4 & 1 \\
2 & \sqrt{14} & 15 & 1 \\
3 & \sqrt{28} & 32 & 1 \\
4 & \sqrt{46} & 55 & 1 \\
\end{bmatrix}\right) = 18\sqrt{14}-36\sqrt{7}+6\sqrt{46}
-12\ne 0.$$ 
\end{remark}

\section{Line pairings that share the same rectangle locus}

The results of the last section suggest two questions: {\it Is there anything special about the hyperbolas that occur as rectangle loci for two pairs of lines?} and {\it If the rectangle locus for two pairs is a hyperbola, how do the shape and location of the hyperbola reflect properties of the two pairs that produced it?}
 The answer to the first question, as we show in  Theorem~\ref{realize}, is  ``no.''  
For the second question, we give in this same theorem  a first step towards an answer. 
 


We mention first an easy but striking consequence of the interpretation in Lem\-ma~\ref{bridge} of rectangle loci in terms of cones.

\begin{proposition} \label{unique} 
Let ${\c P}$ and ${\c Q}$ be two pairs of  lines in the plane. If the lines in ${\c P}$ are orthogonal, then   any rotation of ${\c P}$  will not change the rectangular locus of ${\c P}$ and ${\c Q}$.  
\end{proposition}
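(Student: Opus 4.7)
The proof is essentially a direct application of Theorem~\ref{pos def}(2) together with Lemma~\ref{bridge}, once one adopts the natural interpretation that a rotation of the pair $\c P$ means rotation about the point where its two lines cross. The plan is to show that the HR-cone generated by $\c P$ is invariant under such rotation, so that the surface intersected with the surface generated by $\c Q$ is the same before and after, and therefore so is the projection, which is the rectangle locus.

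In more detail, first I would let $\mathbf{a}$ denote the crossing point of the two orthogonal lines of $\c P$. A rotation of $\c P$ about $\mathbf{a}$ sends $\c P$ to another pair of orthogonal lines through the same point $\mathbf{a}$. By Theorem~\ref{pos def}(2), any pair of orthogonal lines through $\mathbf{a}$ generates the unit cone $(I,\mathbf{a})$, so both $\c P$ and its rotated image generate exactly the surface $(I,\mathbf{a})$ in $\mathbb{R}^3$.

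Finally, by Lemma~\ref{bridge} the rectangle locus for $\c P$ and $\c Q$ equals the projection to the $xy$-plane of the intersection of the surfaces generated by $\c P$ and $\c Q$. Since the surface generated by $\c P$ is unchanged by the rotation, and the surface generated by $\c Q$ is obviously unchanged, the intersection (and therefore its projection) is the same. This gives the claimed invariance of the rectangle locus.

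There is no real obstacle here; the only subtlety is interpreting "rotation of $\c P$" — the statement is vacuously false for rotations about arbitrary centers, because moving the apex of the unit cone changes the intersection with the surface generated by $\c Q$. The content of the proposition is exactly the observation that a pair of orthogonal lines carries no more directional information than its crossing point, which is the substance of Theorem~\ref{pos def}(2).
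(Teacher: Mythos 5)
Your proof is correct and follows the same route as the paper: both invoke Theorem~\ref{pos def}(2) to see that every pair of orthogonal lines through the fixed crossing point generates the same unit cone, and then Lemma~\ref{bridge} to conclude the rectangle locus is unchanged. Your remark that the rotation must be about the crossing point matches the paper's implicit reading ("having the same crossing point").
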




\begin{proof}  
This is a consequence of the fact that rotating a circular cone does not change the cone. In particular,  Theorem~\ref{pos def} implies that all the pairs of orthogonal lines having the same crossing point  define the same unit cone so   Lemma~\ref{bridge} assures that the same rectangle locus is obtained. 
\end{proof}

\begin{example}  Uniqueness can also fail when neither pair is orthogonal and both pairs are anchored at the same point. 
 For example, consider the following positive definite matrices, each of which has determinant~$1$.
$$
A_1=\begin{bmatrix} 
\frac{5}{8}& \frac{1}{2} \\ \frac{1}{2} & 2 \\
\end{bmatrix}, 
B_1=\begin{bmatrix} 
2& 1 \\ 1 & 1 \\
\end{bmatrix},
A_2=\begin{bmatrix} 
1& 0 \\ 0 & 1 \\
\end{bmatrix}, 
B_2=\begin{bmatrix} 
\frac{13}{8}& \frac{3}{2} \\ \frac{3}{2} & 2 \\
\end{bmatrix}.
$$
Then $A_1-B_2 = A_2-B_1$. By Proposition~\ref{ugh}, the HR-cones $(A_1,{\bf 0})$, $(B_2,{\bf 0})$ determine  two different pairs of defining lines than the HR-cones $ (A_2,{\bf 0})$, $(B_1,{\bf 0})$. Yet since $A_1-B_2 = A_2-B_1$, Lemma~\ref{eq lemma} implies that the  two pairs of defining lines for $(A_1,{\bf 0})$ and $(B_2,{\bf 0})$
determine the same rectangle locus as the two pairs of defining lines for $ (A_2,{\bf 0})$ and $(B_1,{\bf 0})$. 
\end{example}


 

We show next that every hyperbola occurs as the rectangular locus for two pairs of lines. We do this  by giving a method for finding  all pairs of HR-cones whose intersection projects to the hyperbola. See \cite{Zia} for an animation illustrating the pencil-like behavior of different pairs of HR-cones whose intersection projects to the same hyperbola.  

\begin{theorem} \label{realize} Each  hyperbola ${\mathcal{H}}$ in the plane  is the projection of the intersection of  two HR-cones, and the set of all such   pairs whose intersection projects to ${\mathcal{H}}$ is faithfully parameterized by a semialgebraic surface in ${\mathbb{R}}^4$.  
%
\end{theorem}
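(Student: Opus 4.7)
The plan is to translate the projection condition into an explicit system of equations via Lemma~\ref{eq lemma} and show that, once the hyperbola is fixed, the entire pair collapses to a function of one cone matrix $A$ together with its apex $\mathbf{a}$. I would begin by normalizing $\mathcal{H}$ to the form $(\mathbf{x}-\mathbf{p})^T M(\mathbf{x}-\mathbf{p}) = 1$ with $M$ symmetric and $\det M < 0$. The rectangle-locus polynomial appearing in Lemma~\ref{eq lemma} must then equal a nonzero scalar multiple $\mu$ of $(\mathbf{x}-\mathbf{p})^T M(\mathbf{x}-\mathbf{p}) - 1$, and matching the quadratic, linear, and constant terms yields
\[
A - B = \mu M, \qquad A\mathbf{a} - B\mathbf{b} = \mu M \mathbf{p}, \qquad \mathbf{a}^T A \mathbf{a} - \mathbf{b}^T B \mathbf{b} = \mu\bigl(\mathbf{p}^T M\mathbf{p} - 1\bigr),
\]
subject to $A,B$ symmetric positive definite with $\det = 1$ and $A \neq B$.

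The crucial reduction is that $\mu$ is already determined by $A$. Since $\det(A - \mu M) = \det B = 1 = \det A$, the $2 \times 2$ expansion $\det(A - \mu M) = \det A - \mu\,\operatorname{tr}(\operatorname{adj}(A) M) + \mu^2 \det M$ gives the quadratic $\mu^2 \det M = \mu\, \operatorname{tr}(\operatorname{adj}(A) M)$ with unique nonzero root $\mu = \operatorname{tr}(\operatorname{adj}(A) M) / \det M$. Hence $B = A - \mu(A) M$ and $\mathbf{b} = B^{-1}(A\mathbf{a} - \mu M \mathbf{p})$ are rational functions of $(A, \mathbf{a})$, and the whole pair is recovered from these two data. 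The remaining scalar condition, after substitution, becomes a single polynomial equation in two coordinates on the $2$-dimensional variety $\{A : A \text{ symm.\ PD}, \det A = 1\}$ and the two coordinates of $\mathbf{a} \in \mathbb{R}^2$. Together with the open semialgebraic requirements that $B$ be positive definite and $\mu \neq 0$, this cuts out the desired semialgebraic surface in $\mathbb{R}^4$ and supplies the faithful parameterization.

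For the existence half of the theorem I would place $\mathcal{H}$ in standard position by a Euclidean motion --- centering at the origin and aligning its axes with the coordinate axes --- and then solve the system under a symmetry ansatz, for instance taking $A$ of the form $\Lambda_\theta$ (with its eigenvectors aligned to the axes of $\mathcal{H}$) and $\mathbf{a} = -\mathbf{b}$. In this reduced setting the remaining scalar and positive-definiteness conditions become one-variable constraints that admit an explicit solution, producing at least one valid pair for every $\mathcal{H}$. The main obstacle I anticipate is the book-keeping for genuine faithfulness: I will need to verify that distinct points of the parameterizing surface produce genuinely distinct ordered pairs of HR-cones (accounting for the $\mathbb{Z}/2$ ambiguity from swapping the two cones) and that the positive-definiteness inequalities carve out a non-empty region of the scalar locus for every admissible shape of $\mathcal{H}$, including the nearly-degenerate extremes where $\det M \to 0$ or $\mu \to 0$.
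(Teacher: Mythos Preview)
Your overall reduction via Lemma~\ref{eq lemma} is the same as the paper's, but your parameterization is organized differently and in one respect more carefully. The paper normalizes $\mathcal{H}$ to $\mathbf{x}^T C\mathbf{x}=1$, fixes $A-B=C$ (implicitly taking your $\mu=1$), and then parameterizes by the first row $(c,b)$ of $B$ (constrained to an auxiliary hyperbola $f(u,v)=0$) together with $\mathbf{b}$ (constrained to the hyperbola $\mathbf{x}^T CA^{-1}B\,\mathbf{x}=1$); this yields two equations in $(u,v,x,y)\in\mathbb{R}^4$. Your observation that $\mu=\operatorname{tr}(\operatorname{adj}(A)M)/\det M$ is forced by $\det B=\det A$ is correct and is not made explicit in the paper; indeed the paper's restriction to $\mu=1$ appears to miss pairs with $A-B$ a non-unit multiple of $C$, so your bookkeeping is sharper here---though it then leads to a single equation in four parameters, hence a three-dimensional locus rather than a two-dimensional ``surface.''

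There is, however, a genuine gap in your existence argument. With $\mathcal{H}$ centered at the origin the linear matching condition is $A\mathbf{a}=B\mathbf{b}$; your ansatz $\mathbf{a}=-\mathbf{b}$ then gives $(A+B)\mathbf{a}=\mathbf{0}$, and since $A+B$ is positive definite this forces $\mathbf{a}=\mathbf{b}=\mathbf{0}$. The scalar condition becomes $0=\mu(\mathbf{p}^T M\mathbf{p}-1)=-\mu$, contradicting $A\neq B$. The paper avoids this by setting $\mathbf{a}=A^{-1}B\mathbf{b}$ and reducing the scalar constraint to $\mathbf{b}^T(CA^{-1}B)\mathbf{b}=1$; since $CA^{-1}B$ is symmetric with $\det(CA^{-1}B)<0$, this is a hyperbola in $\mathbf{b}$ and hence nonempty. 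You can repair your argument the same way: choose a convenient $A$ (for instance $A=\Lambda_\theta$ aligned with the axes of $M$), compute $\mu$ and $B=A-\mu M$, verify that $B$ is positive definite for suitable $\theta$, and then select $\mathbf{b}$ on the resulting indefinite quadric rather than imposing $\mathbf{a}=-\mathbf{b}$.
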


\begin{proof} 
Let ${\mathcal{H}}$ be a hyperbola in the plane. After translation we may assume that ${\mathcal{H}}$   has its center at the origin and so 
there is a symmetric   $2 \times 2$ matrix $C$ with $\det(C)<0$ such that  ${\mathcal{H}}$ is given by the equation ${\bf x}^TC{\bf x}=1$.  By the spectral theorem, after a suitable rotation we may also assume that  there are real numbers  $\lambda_1,\lambda_2$ such that $$C = 
\begin{bmatrix} 
\lambda_1 & 0 \\
0 & \lambda_2 \\ 
\end{bmatrix}.$$ The fact that $\det(C) < 0$ implies that $\lambda_1$ and $\lambda_2$ are  nonzero and have different signs.

We  first describe all the  positive definite symmetric $2 \times 2$ matrices $A,B$ with determinant $1$ such that $A-B=C$.  As a way to find entries for these matrices, consider the quadratic polynomial in the variables $u,v$ given by  $$f(u,v) = v^2  
+\left(\frac{\lambda_2}{\lambda_1}\right)u^2+\lambda_2 u+1.$$ This polynomial   defines a hyperbola since  $\lambda_1$ and $\lambda_2$ having different signs implies the discriminant of the conic is positive. Moreover, since this hyperbola is symmetric about the $x$-axis, we may choose a point $(c,b)$ on the hyperbola such that $c>\max\{0,-\lambda_1\}$.
Let
  $$A = \begin{bmatrix} c+\lambda_1 & b \\ b & \frac{1+b^2}{c+\lambda_1} \\ \end{bmatrix}, \: B=\begin{bmatrix} c & b \\ b & \frac{1+b^2}{c} \\ \end{bmatrix}.$$
 Then  $A$ and $B$ are positive definite symmetric matrices with determinant $1$. A calculation shows that  $A-B = C$ since   $f(c,b)=0$.

Any pair $A,B$  of positive definite symmetric matrices with determinant $1$ and $A-B = C$ must arise this way and, in particular, $f(c,b)$ must be $0$ where $c,b$ are the entries on the first row of $B$. This observation will be implicitly used again at the end of the proof where it translates into the fact that a semialgebraic surface  parameterizes all pairings of cones that produce the hyperbola $\c H$.

To prove now that ${\c H}$ is the projection of the intersection of HR cones, it is enough by 
  Lemma~\ref{eq lemma}  to show   there are vectors ${\bf a},{\bf b}$ such that $0 = A{\bf a}- B{\bf b}$ and ${\mathcal{H}}$ is the set of points ${\bf x}$ with 
${\bf x}^T
 C{\bf x} = -{\bf a}^TA{\bf a}+{\bf b}^TB{\bf b}.$
%
 %
(The fact that the hyperbola is centered at the origin accounts for the absence of the terms involving  ``${\bf c}$'' in Lemma~\ref{eq lemma}.)
Therefore, since $\c H$ is defined by the equation ${\bf x}^T
 C{\bf x}=1$, 
we must show  
there are vectors ${\bf a}$ and  ${\bf b}$ such that \begin{center} $ 1 =- {\bf a}^TA{\bf a}+{\bf b}^TB{\bf b}$ \: and \: $0 = A{\bf a} - B{\bf b}$.   \end{center}
To obtain $0 = A{\bf a} -B {\bf b}$ requires
 ${\bf a} = A^{-1}B{\bf b}$.  
For each vector ${\bf b}$, setting ${\bf a} = A^{-1}B{\bf b}$ yields
\begin{eqnarray*} 
 -{\bf a}^TA{\bf a}+{\bf b}^TB{\bf b} & = & -{\bf b}^TB A^{-1} AA^{-1}B{\bf b} + {\bf b}^TB{\bf b} \\
 &= & {\bf b}^T(-BA^{-1}B+B){\bf b} \: = \: {\bf b}^T((-B+A)A^{-1}B){\bf b} \\
 & = & {\bf b}^TCA^{-1}B{\bf b}.
 \end{eqnarray*}
 Thus it remains to show there is a vector ${\bf b}$ such that $1 =  {\bf b}^TCA^{-1}B{\bf b}.$ Since $\det(C) <0$ and $\det(A) = \det(B) = 1$, we have that $\det(CA^{-1}B) < 0$.  Also, $CA^{-1}B = BA^{-1}C$, so $CA^{-1}B$  is symmetric. 
 Thus the locus of points ${\bf x}$ with the property that 
  ${\bf x}^T
 CA^{-1}B{\bf x}= 1$
 is a hyperbola. Any point on this hyperbola  will do for ${\bf b}$. 

Finally, we claim that the pairs of cones that project to the given hyperbola are parameterized by a semialgebraic   surface in ${\mathbb{R}}^4$. We have already shown that each such pair is determined by a point $(u,v,x,y)$ such that $(u,v)$ lies on the hyperbola   in ${\mathbb{R}}^2$ given by
\begin{eqnarray}\label{new 1}  v^2  
+\left(\frac{\lambda_2}{\lambda_1}\right)u^2+\lambda_2 u  = -1,
\end{eqnarray}
 while ${\bf x}=(x,y)$ lies on the hyperbola in ${\mathbb{R}}^2$  defined by  
$
 {\bf x}^T
 CA^{-1}B{\bf x} = 1$. Unpacking the latter equation results in  
  \begin{eqnarray} \label{new 2}
  (\lambda_1\lambda_2 u +\lambda_1)x^2+2\lambda_1\lambda_2 vxy-(\lambda_2^2u+\lambda_1\lambda_2^2-\lambda_2)y^2 = 1.
  \end{eqnarray}
 The pairs of cones that project to the given hyperbola  are determined by the points $(x,y,u,v)$ that lie on the surface in ${\mathbb{R}}^4$ obtained by intersecting  the two hypersurfaces~(\ref{new 1}) and~(\ref{new 2}) subject to the constraint $u>\max\{0,-\lambda_1\}$. Specifically, $(x,y)$ is the point ${\bf b}$ and $(u,v)$ is the first row of the matrix $B$, which in turn determines the second row of $B$.  With $A = C-B$ and ${\bf a} = A^{-1}B{\bf b}$,  the intersection of the  pair of HR-cones  $(A,{\bf a})$ and $(B,{\bf b})$ projects to the given hyperbola. That this parameterization is faithful follows from Proposition~\ref{ugh}. 
\end{proof}


\begin{corollary} 
Let ${\mathcal{H}}$ be a hyperbola in the plane. The set of all pairs of pairs of lines that have $\c H$ as their rectangle locus is 
 faithfully parameterized by a semialgebraic surface in ${\mathbb{R}}^4$.  \end{corollary}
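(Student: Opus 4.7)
The plan is to derive the corollary as an essentially immediate consequence of Theorem~\ref{realize}, using the dictionary between pairs of lines and HR-cones developed in Sections 2 and 3.

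I would begin by noting that if $\mathcal{H}$ is the rectangle locus of two pairs of lines, then Theorem~\ref{indiff} forces neither pair to consist of parallel lines. Consequently, Theorem~\ref{cone 1} associates to each pair a unique HR-cone, and Lemma~\ref{bridge} identifies the rectangle locus with the projection to the $xy$-plane of the intersection of these two HR-cones. Hence the assignment sending a pair of pairs of lines to its associated pair of HR-cones is well-defined, and its image lies in the set of pairs of HR-cones whose intersection projects to $\mathcal{H}$. This latter set is faithfully parameterized by a semialgebraic surface $\Sigma \subset \mathbb{R}^4$ by Theorem~\ref{realize}.

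Next I would invert the assignment by invoking Theorem~\ref{pos def}(1): each non-unit HR-cone has a unique pair of generating lines, explicitly recovered from its cone matrix and apex as a degenerate hyperbola. Hence on the locus of $\Sigma$ where neither of the two cones is the unit cone, the assignment from pairs of pairs of lines to pairs of HR-cones is a bijection, and one can transport the parameterization of pairs of HR-cones to a parameterization of pairs of pairs of lines by the same semialgebraic surface $\Sigma$. Proposition~\ref{ugh} ensures no double counting from the cone side.

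The principal subtlety to handle is the unit cone case, which by Theorem~\ref{pos def}(2) occurs precisely when one of the two pairs of lines is orthogonal. There the orthogonal pair is determined only up to rotation through the common apex (cf.\ Proposition~\ref{unique}), so a single point of $\Sigma$ corresponds to a one-parameter family of pairs of pairs of lines. Theorem~\ref{indiff} rules out both cones being unit cones simultaneously when the locus is a hyperbola, so this degeneracy occurs only on a lower-dimensional semialgebraic subset of $\Sigma$, cut out by the condition that one of the two cone matrices equals $I$. I would handle it by refining the parameterization on this subset, absorbing the extra rotation parameter within $\mathbb{R}^4$, so that the resulting parameter space remains a semialgebraic surface and the parameterization is faithful throughout. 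This bookkeeping on the orthogonal stratum is the main technical nuisance, but it is routine once the generic part of the bijection is in hand.
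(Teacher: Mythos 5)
Your route is the same one the paper takes: its printed proof of this corollary is literally ``Apply Lemma~\ref{bridge} and Theorem~\ref{realize},'' i.e., identify the rectangle locus with the projection of the intersection of the two generated HR-cones and then quote the parameterization of cone pairs. Your generic-stratum argument (Theorem~\ref{indiff} to rule out parallel pairs, Theorem~\ref{cone 1} and Lemma~\ref{bridge} to pass to cones, Theorem~\ref{pos def}(1) and Proposition~\ref{ugh} to invert) is a correct and more explicit rendering of exactly that.

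Where you depart from the paper is in flagging the unit-cone stratum, and you are right that there is something to flag: Theorem~\ref{indiff} allows one of the two pairs to be orthogonal, in which case its cone is a unit cone, Theorem~\ref{pos def}(2) says that cone is generated by \emph{every} orthogonal pair through its apex, and Proposition~\ref{unique} says all of these give the same rectangle locus. So over such points the faithful parameterization of cone pairs does not transport to an injective-and-surjective parameterization of literal pairs of pairs of lines; the paper's one-line proof silently identifies an orthogonal pair with the unit cone it generates. Your proposed repair, however, is asserted rather than executed: ``refining the parameterization on this subset, absorbing the extra rotation parameter within $\mathbb{R}^4$'' is not an argument, and it is not obvious that the result is still a single semialgebraic surface meeting the generic stratum correctly. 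Either adopt the paper's implicit convention explicitly (parameterize configurations up to the rotation ambiguity of Proposition~\ref{unique}), or supply the semialgebraic description of the orthogonal stratum together with its angle coordinate. As written, you have correctly reproduced the paper's argument where it works and correctly isolated, but not closed, the one place it needs care.
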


\begin{proof} Apply Lemma~\ref{bridge} and Theorem~\ref{realize}.
\end{proof}

  \section{Additional context}
  
  There is an extensive literature on  rectangles and squares inscribed in Jordan  curves; see for example \cite{Mat} and the discussion in \cite[p.~1]{Sch}. That it is always possible to 
 find a rectangle inscribed  in  a Jordan curve is due to Vaughan (see \cite[p.~71]{Mey}). Meanwhile, 
 the problem of finding inscribed squares  remains open in  full generality, although many important cases have been resolved (see for example \cite{Mat}). The existence of inscribed rectangles for a polygon is a simpler matter. 
 While we have focused on rectangles inscribed in four lines rather than 
 in polygons, it is not hard to apply the ideas discussed after Definition 4.1 to obtain a description of the centers of the rectangles inscribed  in polygons. (This involves ruling out the ``exscribed'' rectangles that our methods find when viewing the polygon as having sides lying  on lines. It is straightforward to do this.)
  As the discussion after Definition 4.1 indicates, there will be 21 rectangle loci involved in the description of the rectangles inscribed in a quadrilateral.  
 
   
   \medskip

{\it Acknowledgement.} We thank the referee for helpful comments that improved the presentation.

\end{document}